\newcommand{\BB}{{\cal B}}
\newcommand{\EE}{{\cal E}}
\newcommand{\FF}{{\cal F}}
\newcommand{\MM}{{\cal M}}
\newcommand{\RR}{{\cal R}}
\newcommand{\TT}{{\cal T}}
\newcommand{\BR}{{\mathbb R}}
\newcommand{\dyw}{\mbox{\rm div}}
\newtheorem{theorem}{\bf Theorem}[section]
\newtheorem{proposition}[theorem]{\bf Proposition}
\newtheorem{lemma}[theorem]{\bf Lemma}
\newtheorem{corollary}[theorem]{\bf Corollary}
\theoremstyle{definition}
\newtheorem{definition}[theorem]{Definition}
\newtheorem{example}[theorem]{\bf Example}
\newtheorem{remark}[theorem]{Remark}
\numberwithin{equation}{section}
\begin{document}

\title {On uniqueness and structure of renormalized solutions to
integro-differential equations with general measure data}
\author {Tomasz Klimsiak\\
{\small Faculty of Mathematics and Computer Science,
Nicolaus Copernicus University} \\
{\small  Chopina 12/18, 87--100 Toru\'n, Poland}\\
{\small E-mail address: tomas@mat.umk.pl}}
\date{}
\maketitle
\begin{abstract}
We propose  a new definition of renormalized solution to linear
equation with self-adjoint operator generating a Markov semigroup
and bounded Borel measure on the right-hand side. We give a
uniqueness result and study the structure of solutions to
truncated equations.
\end{abstract}

\footnotetext{{\em Mathematics Subject Classification:}
Primary  35R06; Secondary 35R05, 45K05, 47G20, 35D99}

\footnotetext{{\em Keywords:} Renormalized solution, Dirichlet
form,  measure data, Markov semigroup, Markov process, Green
function }


\section{Introduction}
\label{sec1}

In the paper, $E$ is a locally compact separable  metric space and
$m$ is a Radon measure on $E$ with full support. Let $(A,D(A))$ be
a non-positive definite self-adjoint operator on $L^2(E;m)$
associated with some Dirichlet form $(\EE,D(\EE))$ on $L^2(E;m)$.
The main goal of the present paper is to give a new definition of
renormalized solution to the linear equation
\begin{equation}
\label{eqi.1}
-Au=\mu
\end{equation}
with general (possibly nonsmooth in the Dirichlet forms  theory
sense) bounded Borel measure $\mu$  on $E$. It is known that such
a measure admits unique decomposition
\begin{equation}
\label{eq1.2} \mu=\mu_d+\mu_c
\end{equation}
into the  absolutely continuous, with respect to the  capacity
$\mbox{Cap}$ generated by $(\EE,D(\EE))$, part $\mu_d$ (so-called
diffuse part or smooth part of $\mu$) and the orthogonal, with
respect to Cap, part $\mu_c$ (so-called concentrated part). The problem of right definition to (\ref{eqi.1}) is rather subtle if we require that the solution $u$
be unique.

In the paper, we  assume that the resolvent
$(R_{\alpha})_{\alpha>0}$ generated by $A$ is Fellerian, i.e.
$R_\alpha (C_b(E))\subset C_b(E)$ for some (and hence for all)
$\alpha>0$, and   there exists  a Green function $G$ for  $A$ (see
Section \ref{sec2.4}).

Our new definition reads as follows: $u\in \BB(E)$ is a
renormalized solution to (\ref{eqi.1}) if
\begin{enumerate}
\item [(i)]  $T_k(u):= (u\wedge k)\vee(-k)\in D_e(\EE)$, $k\ge 0$,
where $D_e(\EE)$ is the extended Dirichlet space, i.e. an
extension of $D(\EE)$ such that $D_e(\EE)$ with inner product
$\EE$ is a Hilbert space.
\item [(ii)] There exists a family of bounded smooth  measures
$(\nu_k)_{k\ge 0}$ on $E$ such that
\[
\EE(T_k(u),\eta)=\langle \mu_d,\eta\rangle
+\langle \nu_k,\eta\rangle, \quad \eta\in D_e(\EE)\cap\BB_b(E),
\quad k\ge 0,
\]
\item[(iii)] $\nu_k\rightarrow \mu_c$ in the narrow topology, i.e.
for every $\eta\in C_b(E)$,
\[
\lim_{k\rightarrow\infty}\int_E\eta\,d\nu_k=\int_E\eta\,d\mu_c.
\]
\end{enumerate}

A similar definition of a solution to (\ref{eqi.1}), also
guaranteeing uniqueness, was  introduced recently in my joint
paper with Rozkosz \cite{KR:MM}. In that paper by a solution we
mean $u\in\BB(E)$ satisfying  (i) and (ii), and the following
condition
\begin{enumerate}
\item[(iii')] $\lim_{k\rightarrow\infty}\int_EG(x,y)\,\nu_k(dy)= \int_EG(x,y)\,\mu_c(dy)$ for $m$-a.e. $x\in E$.
\end{enumerate}
Condition (iii) is much simpler than (iii') because it does not involves the notion of the Green function. One of the main results of the present paper says that (i)--(iii) still ensure uniqueness for solutions to  (\ref{eqi.1}).
In the second part of the paper  we show interesting  properties of  the family $(\nu_k)_{k\ge 0}$: a structure theorem, so-called reconstruction property
and the narrow convergence of variations.

The above definition (i)--(iii) is a counterpart to the definition introduced by Dal Maso, Murat, Orsina and Prignet   \cite{DMOP} for equations with local nonlinear operators of Leray-Lions type of the form
\[
A(u)=\dyw(a(\cdot,\nabla u)).
\]
For such operators,  $\EE$ appearing in (i), (ii) is replaced by
\[
\EE(u,v):=\int_E a(\cdot,\nabla u)\nabla v\,dm,
\]
and the domain of $\EE$ is the natural energy space in which $\EE(u,u)$ is finite.
As a matter of fact, this modified definition (i)--(iii) is one of the four equivalent definitions of renormalized solutions considered in \cite{DMOP}.

The concept of renormalized solutions  was a crucial step in the development of the  theory of  elliptic and parabolic equations with (nonlinear) local operators and  measure data since  it gives  partial uniqueness results.  The complete uniqueness result in nonlinear case  is still an open problem.

It is worth noting here that among the defintions considered in \cite{DMOP} the defintion presented above have some remarkable feature. Namely, in condition (ii) the term $\EE(T_k(u),\eta)$ is well defined since both $T_k(u) $ and $\eta$ are in the  domain of $\EE$.  In the other  definitions considered in \cite{DMOP},
a different variational formulas (counterparts to (ii)) are considered. In these formulas the  term  $\EE(u,\eta)$ always appears which of course requires an extension
of the form $(\EE,D(\EE))$ in such a way that $\EE(u,\eta)$ makes sense for rich enough class of test functions $\eta$.  All the used extensions of $\EE$
in \cite{DMOP} are based   on the property
\[
\EE(u,T_k(u))=\EE(T_k(u),T_k(u)),
\]
which is true only for the forms associated with  local operators. For that reason only definition of type (i)--(iii) can be directly adopted to the
nonlocal case.  In case $\EE$ is nonlocal of the form
\begin{equation}
\label{eqi.7.1}
\EE(u,v)=\int_{E\times E}(u(x)-u(y))(v(x)-v(y))\,J(dx,dy)
\end{equation}
for some symmetric positive measure $J$ on $E\times E\setminus d$, Alibaud,  Andreianov and Bendahmane \cite{AAB} proposed the following extension of $\EE$: for $h\in C^1_c(\BR)$
\begin{align}
\label{eq.i1.2}
\nonumber
\EE(u,h(u)\eta)&:= \int_{E\times E}(u(x)-u(y))(h(u)(x)-h(u)(y))
\frac{\eta(x)+\eta(y)}{2}\,J(dx,dy)\\
& \quad+\int_{E\times E}(u(x)-u(y))(\eta(x)-\eta(y))
\frac{h(u)(x)+h(u)(y)}{2}\,J(dx,dy)
\end{align}
for bounded $\eta$  and $u$ such that
\begin{align}
\label{eq.i1.3}
\int_{E\times E}(u(x)-u(y))(T_k(u)(x)-T_k(u)(y))\,J(dx,dy)<\infty,
\quad k\ge 0.
\end{align}
A careful analysis shows that thanks to (\ref{eq.i1.3})  and
regularity of $h$ both integrals in (\ref{eq.i1.2}) are well
defined. However, the crucial assumption that $h$ has compact
support  makes this approach applicable only to equations with
smooth (diffuse) measure data. In \cite{AAB}, imitating one of the
definition considered in \cite{DMOP},  the authors introduced the
following definition of a solution to (\ref{eqi.1}) with $A$
generated  by (\ref{eqi.7.1}), $E=\BR^d$ and  smooth measure data
(in fact for $\mu\in L^1(\BR^d)$ but it naturally extends to
smooth measure data): a measurable function $u$ satisfying
(\ref{eq.i1.3}) is a renormalized solution to (\ref{eqi.1}) if
\begin{equation}
\label{eq.i1.5}
\EE(u,h(u)\eta)=\langle \mu,h(u)\eta\rangle,\quad \eta\in C^\infty(\BR^d),\, h\in C^1_c(\BR^d)
\end{equation}
and
\begin{align}
\label{eq.i1.4}
&\lim_{k\rightarrow\infty}\int_{E\times E} (u(x)-u(y))\{(T_{k+1}(u)-T_k(u))(x)
\nonumber\\
&\qquad\qquad\qquad-(T_{k+1}(u)-T_k(u))(y)\}\,J(dx,dy)=0.
\end{align}
We extend this definition to general forms $\EE$ considered here (and smooth measure data) and
show that if $u$ is a renormalized solution to (\ref{eqi.1}) in the sense  of definition (i)--(iii), then $u$ is a renormalized solution
to (\ref{eqi.1}) is the sense of  (\ref{eq.i1.5}) and (\ref{eq.i1.4}).

In order to get existence and uniqueness result for solutions to
(\ref{eqi.1}) with general measure data, we propose definition
(i)--(iii) which  seems to be the more suitable  formulation of
the definition  of renormalized solution to (\ref{eqi.1}) since it is
applicable not only to general measure data but also to  wide
class of operators associated  with  local and non-local  Dirichlet forms.

We prove that for bounded Borel measure $\mu$  there exists a
unique renormalized solution to (\ref{eqi.1}). Moreover, if $u$ is
renormalized solution to (\ref{eqi.1}) then
 \begin{equation}
 \label{eqi.2}
u(x)=\int_E G(x,y)\,\mu(dy)\quad m\mbox{-a.e. }x\in E,
 \end{equation}
and even stronger convergence of $\{\nu_k\}$ holds. Namely
\[
\nu_k^+\rightarrow \mu_c^+,\quad \nu_k^-\rightarrow \mu_c^-\quad
\mbox{narrowly.}
\]
From this it follows in particular that $u$ is a renormalized
solution to (\ref{eqi.1}) in the sense of  (i)--(iii) if and only
if $u$ is a duality solution to (\ref{eqi.1}) in the sense of
Stampacchia. The notion of duality solutions for linear equations
with uniformly elliptic divergence form operators and general
measure data was introduced by Stampacchia in \cite{Stampacchia}.
His approach was adapted to fractional Laplacian in
\cite{KPU,Petitta1}. The general formulation for operators $A$
generated by Markov semigroups was introduced in \cite{K:CVPDE}
(see also \cite{KR:JFA} for the case of smooth measure data).

In the second part of the paper, we give a complete
characterization  of the family $(\nu_k)_{k\ge 0}$.  Recall that
each regular symmetric Dirichlet form $(\EE,D(\EE))$ admits the
following (unique)  Beurling-Deny decomposition
\[
\EE(u,v)=\EE^{(c)}(u,v)+\int_{E\times
E}(u(x)-u(y))(v(x)-v(y))\,J(dx,dy) +\int_E v(x) u(x)\,\kappa(dx).
\]
Here $\EE^{(c)}$ is the local part of $\EE$, $J$ is a symmetric
positive Radon measure outside the diagonal $d$ of $E\times E$ and
$\kappa$ is a smooth Radon measure on $E$, called the killing
measure. We show that
\begin{equation}
\label{eqi.124}
\nu_k=-\mathbf{1}_{\{u\ge k, u<-k\}}\cdot \mu_d
+\frac12 l_k(u)-\frac12 l_{-k}(u)+
\frac12 j_k(u)-\frac12 j_{-k}(u)
\end{equation}
with
\begin{align*}
j_k(u)(dx)&=2\int_E
\Big(|u(y)-k|-|u(x)-k|-\mbox{\rm sign}(u(x)-k)(u(y)-u(x))\Big)J(dx,dy)\\
&\quad
+(\mathbf{1}_{\{u(x)>k\}}(|k|+k)+\mathbf{1}_{\{u(x)\le
k\}}(|k|-k))\kappa(dx).
\end{align*}
and $\{l_k(u),\, k\in\mathbb Z\}$ characterized as follows
\begin{equation}
\label{eq.ad.fd.oam}
\int_\BR\langle l_k(u),\eta\rangle \psi(k)\,dk=\langle \mu^c_{\langle u \rangle},\psi(u)\eta\rangle,\quad \psi,\eta\in\BB^+(E),
\end{equation}
where $\mu_{\langle u\rangle}^c$ is a  positive smooth  Radon measure
measure on $E$ given by
\begin{equation}
\label{eqi.oqwb4}
\langle \mu^c_{\langle u\rangle},\eta\rangle
=\lim_{k\rightarrow \infty}2\EE^{(c)}(T_k(u)\eta,T_k(u))
-\EE^{(c)}(T_k(u)^2,\eta),\quad \eta\in \BB_b(E)\cap D(\EE).
\end{equation}
From (\ref{eq.ad.fd.oam}) it follows in particular that if $\EE$
is local (i.e. $J\equiv 0$), then
\[
\frac{1}{c_n-b_n}\int_{\{b_n\le u\le c_n\}} \eta\, d\mu^c_{\langle u\rangle}\rightarrow \langle \mu_c^+,\eta\rangle,\qquad
\frac{1}{c_n-b_n}\int_{\{-c_n\le u\le -b_n\}}\eta \,d \mu^c_{\langle u\rangle}\rightarrow \langle \mu_c^-,\eta\rangle,
\]
for all  sequences $\{b_n\}$, $\{c_n\}$ of positive numbers such
that $b_n<c_n$, $n\ge1$ and $b_n,c_n\rightarrow \infty$ as
$n\rightarrow\infty$ (the so-called reconstruction property).
Observe also that $j_k(u)$ may be concentrated on the whole $E$.
Hence, contrary to the local case, the measures $\nu_k$ need not  be
concentrated on the set $\{|u|=k\}$.

In the present paper we focus our attention on linear equation
(\ref{eqi.1}). However, our results also apply to semilinear
equations of the form
\begin{equation}
\label{eqi.123}
-Au=f(\cdot,u)+\mu
\end{equation}
with $f$ being a measurable function on $E\times\BR$. By a
renormalized solution to (\ref{eqi.123}) we mean a measurable
function $u$ on $E$ such that $f(\cdot,u)\in L^1(E;m)$ and
(i)--(iii) hold when we replace  $\mu_d$ by $f(\cdot,u)+\mu_d$ in
condition (ii) (since $(f(\cdot,u)+\mu)_d=f(\cdot,u)+\mu_d$). From
our results it follows that if $f$ is nonincreasing with respect
to the second variable, then there exists at most one renormalized
solution to (\ref{eqi.123}).

\section{Notation and standing assumptions}
\label{sec2}

In the paper, $E$ is a locally compact separable metric space and $\partial$ is a one-point compactification of $E$. If $E$ is already compact, the $\partial$ is an isolated point. We adopt the convention that each function $f$ on $E$ is extended to $E\cup\{\partial\}$ by setting $f(\partial)=0$.

We denote by $\BB(E)$ the set of Borel measurable functions on
$E$.   $\BB_b(E)$, $\BB^+(E)$  are the subsets of $\BB(E)$
consisting of bounded and positive functions, respectively. We
denote by $\MM(E)$ the set of Borel measures on $E$, and by
$\MM^+(E)$ the subset of $\MM(E)$ consisting of positive measures.
For given $\eta\in \BB^+(E)$ and $\mu\in \MM^+(E)$ we set
\[
\langle \mu,\eta\rangle= \int_E\eta\,d\mu,
\]
whenever the integral exists. For given  $u \in \BB^+(E)$ and
$\mu\in \MM^+(E)$, we denote by $u\cdot \mu$ the measure on $E$
defined by
\[
\langle u\cdot\mu,\eta\rangle
=\langle \mu,u\eta\rangle,\quad \eta\in\BB^+(E).
\]

\subsection{Dirichlet forms and potential theory}

In the whole paper,  $(\EE,D(\EE))$ is a symmetric Dirichlet form
on $L^2(E;m)$ which  is regular, that is the set $C_c(E)\cap
D(\EE)$ is dense in $C_c(E)$ with uniform norm, and in $D(\EE)$
with $\EE_1$-norm, where for $\alpha>0$,
$\EE_\alpha(u,v)=\EE(u,v)+\alpha(u,v)$, $u,v\in D(\EE)$. We also
assume that it is transient, that is there exists a strictly
positive function $g$ on $E$ such that
\[
\int_E |u| g\,dm\le c\sqrt {\EE(u,u)},\quad u\in D(\EE).
\]
Therefore, by \cite[Theorem 1.5.3]{FOT}, there exists an extension
$D_e(\EE)\subset L^1(E,g\cdot m)$ of $D(\EE)$ such that
$(\EE,D_e(\EE))$ is a Hilbert space. By \cite[Theorem 1.3.1]{FOT},
there exists a unique self-adjoint non-positive definite operator
$(A,D(A))$ on $L^2(E;m)$ such that
\[
D(A)\subset D(\EE),\quad \EE(u,v)=(-Au,v),
\quad u\in D(A),\, v\in L^2(E;m).
\]
We denote by $(T_t)_{t\ge 0}$ the semigroup of contractions on $L^2(E;m)$
generated by $(A,D(A))$ and by $\mbox{Cap}$ the  capacity on $E$
defined  as follows: for and open $U\subset E$,
\[
\mbox{Cap}(U)=\inf\{\EE(u,u):u\in D(\EE),\, u\ge \mathbf{1}_U\,\,
m\mbox{-a.e.}\},
\]
and for an arbitrary $B\subset E$,
\[
\mbox{Cap}(B)=\inf\{\mbox{Cap}(U): B\subset U,\, U\subset
E\,\,\mbox{is open}\}.
\]
We say that a property $P$ holds quasi everywhere (q.e. in
abbreviation) if it holds outside a set $N$ with
$\mbox{Cap}(N)=0$. We say that a measurable function $u$ on $E$ is
quasi-continuous if for every $\varepsilon>0$ there exists a
closed set $F_\varepsilon\subset E$ such that
$\mbox{Cap}(E\setminus F_\varepsilon)<\varepsilon$ and
$u_{|F_\varepsilon}$ is continuous. By \cite[Theorem 2.1.7]{FOT},
every function $u\in D_e(\EE)$ has an $m$-version $\tilde u$ which
is quasi-continuous.

For a Borel measure on $E$,  $|\mu|$ stands for its total
variation. We say that a Borel measure $\mu$ on $E$ is smooth if
$|\mu|(B)=0$ for every Borel set $B\subset E$ such that
$\mbox{Cap}(B)=0$, and there exists a strictly positive
quasi-continuous function $\eta$ on $E$ such that $\langle
|\mu|,\eta\rangle<\infty$. The set of all positive smooth measures
on $E$ will be denoted by $S$. We denote by $\MM_{0,b}(E)$ the set
of bounded smooth measures, and by $\MM_{0,b}^+(E)$ the set of
positive bounded smooth measures. Each $\mu\in\MM(E)$ admits
unique decomposition of the form (\ref{eq1.2}), where $\mu_d$ is a
smooth measure and $\mu_c$ is concentrated on the set $B\subset E$
such that $\mbox{Cap}(B)=0$.

\subsection{Probabilistic potential theory}
\label{sec2.4}

By \cite[Section 7]{FOT}, there exists a Hunt process
$
\mathbb X=\{(X_t)_{t\ge 0}, (P_x)_{x\in E\cup {\partial}}, \mathbb F:= (\FF_t)_{t\ge 0}\}$
with life time $\zeta$
associated with the Dirichlet form $(\EE,D(\EE))$ in the sense that for every  $\eta\in L^2(E;m)\cap \BB^+(E)$,
\[
T_t f(x)=E_x f(X_t),\quad   m\mbox{-a.e.}
\]
In the paper, we assume that $\mathbb X$ satisfies absolute continuity condition, i.e. there exists a positive Borel function $p$ on $\BR^+\times E\times E$
such that for all $x\in E$, $t>0$ and $f\in\BB^+(E)$,
\[
E_x f(X_t)=\int_E f(y) p(t,x,y)\, m(dy).
\]
We denote by $(P_t)_{t\ge 0}$ (resp. $(R_{\alpha})_{\alpha>0})$ the semigroup (resp. resolvent) associated with $\mathbb X$. Recall that
for all $t,\alpha\ge 0$ and $f\in\BB^+(E)$,
\[
P_t f(x)=E_xf(X_t),\qquad R_\alpha f(x)= E_x\int_0^\zeta e^{-\alpha t}f(X_t)\,dt, \quad x\in E,
\]
We set $R:= R_0$. For  $\mu\in\MM^+(E)$, we set
\[
R\mu(x)=\int_E G(x,y)\,\mu(dy),\quad x\in E.
\]
Here $G$ is  the Green  function defined by
\[
G(x,y)=\int_0^\infty p(t,x,y)\,dt,\quad x,y\in E.
\]

In the sequel, we write that some property $P$ holds q.a.s. (resp. a.s.) if it holds outside  a set  $B\in\FF_\infty$
such that $P_x(B)=0$ for q.e. $x\in E$ (resp. for every $x\in E$).

Recall that $u\in\BB^+(E)$ is called excessive if
\[
\sup_{t>0} P_tu(x)=u(x),\quad x\in E.
\]
The set of all  excessive functions will be denoted by $Exc$. It is well known that for  any $\eta,\psi\in Exc$ the mapping
\[
\mathbb R^+\ni t\mapsto \frac1t\langle \psi,\eta-P_t\eta\rangle
\]
is nonincreasing. Moreover, if $\psi=R\mu$ for some positive Borel measure $\mu$ and $\psi<\infty$ $m$-a.e., then
\[
\sup_{t>0}\frac1t\langle \psi,\eta-P_t\eta\rangle=\lim_{t\rightarrow 0^+}\frac1t\langle \psi,\eta-P_t\eta\rangle=\langle\mu,\eta\rangle.
\]

Let $\mathbf A^+_c$ denote the set of  positive continuous additive functionals of $\mathbb X$ (see \cite[Section 5.1]{FOT}). It is well known that there exists an isomorphism (the so-called Revuz duality)
\[
\RR: \mathbf A^+_c-\mathbf A_c^+\rightarrow S-S
\]
defined as follows: for every $A\in \mathbf A^+_c$ and every continuous positive $\eta$ on $E$,
\[
\langle \mathcal R(A),  \eta\rangle = \lim_{t\rightarrow 0^+}\frac1t E_m\int_0^t \eta(X_r)\,dA_r.
\]
By \cite[Theorem 5.1.3]{FOT}, $t\mapsto \frac1t E_m\int_0^t \eta(X_r)\,dA_r$ is   nonincreasing  for any  $\eta\in Exc$ and $A\in\mathbf A^+_c$, and the above equality also holds for $\eta\in Exc$. Moreover,  if $\eta$ is excessive or is a positive continuous function $\eta$ on $E$, then
\begin{equation}
\label{eq2.1}
\lim_{t\rightarrow 0^+}\frac1t E_m\int_0^t \eta(X_r)\,dA_r=\lim_{t\rightarrow 0^+}\frac1t E_{\eta\cdot m}\int_0^t \,dA_r.
\end{equation}
In the sequel, for given $\mu\in S$ we set $A^\mu:= \mathcal R^{-1}(\mu)$.
By \cite[Theorem 5.1.3]{FOT}, for all $\eta\in \BB^+(E)$ and $\mu\in S$,
\begin{equation}
\label{eq2.rdgf}
E_x\int_0^\zeta \eta(X_t)\,dA^\mu_t=\int_E\eta(y)G(x,y)\,\mu(dy),\quad x\in E.
\end{equation}

For a given c\`adl\`ag special semimartingale $Y$ and $k\in \BR$,
we denote by $L^k(Y)$ the local time of $Y$ at $k$ (see \cite[page
212]{Protter}). We also put
\[
J^k_t(Y)= \sum_{s\le t}
(|Y_s-k|-|Y_{s-}-k|-\mbox{sign}(Y_{s-}-k)\Delta Y_s ),
\]
where $\Delta Y_t=Y_t-Y_{t-}$\,, $Y_{t-}=\lim_{s\rightarrow t^-}
Y_s$ and $\mbox{sign}(x)=1$ if $x>0$ and $\mbox{sign}(x)=-1$ if
$x\le0$. Since $x\mapsto|x-k|$ is a convex function, $J^k$ is an
increasing process. By the Tanaka-Meyer formula (see \cite[Theorem
70, page 214]{Protter}) for every convex function
$\varphi:\BR\rightarrow \BR$,
\begin{align}
\label{eq2.tmf} \varphi(Y_t)=\varphi(Y_0)+\int_0^t
\varphi'(Y_{s-})\,dY_s+\frac12\int_\BR L^k_t(Y)\,\mu(dk)+\frac12\int_\BR
J^k_t(Y)\,\mu(dk),
 \end{align}
where $\varphi'$ is the left derivative of $\varphi$ and $\mu= (\varphi')'$ with second derivative taken in distributional sense.
Since $Y$ is a special semimartingale, there exists process
${}^pJ^k(Y)$ which is the dual predictable projection of $J^k(Y)$.
In case $Y=u(X)$ q.a.s. it is easy to observe that $L^k(u(X))$ and $J^k(u(X))$ are positive additive functionals.
By the definition of local times, $L^k(u(X))$ is continuous, and since $\mathbb X$ is a Hunt process, $^pJ^k(u(X))$ is continuous too (and it is still a positive additive functional, see \cite[Theorem A.3.16]{FOT}). We set
\begin{equation}
\label{eq2.rdgf1}
l_k(u)=\RR(L^k(u(X))),\quad    j_k(u)=\RR(^pJ^k(u(X))),\quad \lambda_k(u)=l_k(u)+j_k(u).
\end{equation}
By \cite[Theorem IV.69]{Protter}, the measure $l_k(u)$ is concentrated on the set $\{u=k\}$.

\section{Integral, duality,  probabilistic  and very weak solutions}
\label{sec3}

\begin{definition}
\label{def2.1i}
Let $\mu\in\MM_{b}(E)$.
We say that $u\in \mathcal B(E)$ is an integral solution to (\ref{eqi.1}) if
\begin{equation}
\label{eq2.irf}
u(x)=\int_E G(x,y)\,\mu(dy),\quad m\mbox{-a.e. }x\in E.
\end{equation}
\end{definition}

\begin{remark}
\label{rem.aqc}
Let $u$ be an integral solution to (\ref{eqi.1}). Define
\[
N=\{x\in E:\int_E G(x,y)\,|\mu|(dy)=\infty\}.
\]
By \cite[Proposition 3.2]{K:CVPDE}, $\mbox{Cap}(N)=0$, and   by \cite[Theorem 3.7]{K:CVPDE}, $\tilde u$ defined as
\begin{equation}
\label{eq3.etfr}
\tilde u(x)=0,\quad x\in N,\qquad \tilde u(x)=\int_E G(x,y)\,\mu(dy),\quad x\in E\setminus N,
\end{equation}
is a quasi-continuous $m$-version of $u$. Therefore  we may assume that any integral solution $u$ to (\ref{eqi.1}) is quasi-continuous and (\ref{eq2.irf}) is satisfied for  q.e. $x\in E$.
\end{remark}

The next definition introduced in \cite{K:CVPDE}  is a generalization, to the class of operators considered in the present paper, of Stampacchia's definition  introduced in \cite{Stampacchia} in case $A$ is a uniformly elliptic diffusion operator in divergence form. In case $A$ is a fractional Laplacian, duality solutions  were considered in \cite{KPU} (on $\BR^d$) and in \cite{Petitta1} (on bounded domains in $\BR^d$).

\begin{definition}
\label{def2.1d}
Let $\mu\in\MM_{b}(E)$.
We say that $u\in \mathcal B(E)$ is a duality solution to (\ref{eqi.1}) if
for every $\eta\in\BB(E)$ such that $R|\eta|$ is bounded
\[
\langle u, \eta\rangle=\langle \mu, R\eta\rangle.
\]
\end{definition}

It is worth noting here that in case $\mu$ is a smooth measure it is possible to define duality solutions for general operators corresponding to transient regular Dirichlet forms, i.e. without the additional assumption that there exists the Green function for $A$ (see \cite{KR:JFA}).

The following definition of probabilistic solution to (\ref{eqi.1}) was introduced in \cite{K:CVPDE}. To formulate it, we first recall that $M$ is called a local
martingale additive functional (local MAF for short) of $\mathbb X$ if $M$ is an additive functional of $\mathbb X$ and $M$ is a local martingale under the measure $P_x$ for q.e. $x\in E$ (see  \cite{K:CVPDE} for details).

\begin{definition}
\label{def2.1}
Let $\mu\in\MM_{b}(E)$.
We say that $u\in \mathcal B(E)$ is a probabilistic solution to (\ref{eqi.1}) if
\begin{enumerate}
\item [(i)] there exists a local martingale additive functional $M$ of $\mathbb X$ such that
\[
u(x)=u(X_t)+\int_0^t\,dA^{\mu_d}_r-\int_0^t\,dM_r,\quad t\in [0,\zeta),\quad\mbox{q.a.s.}
\]
\item [(ii)] for every sequence  $\{\tau_k\}$ of $\mathbb F$-stopping times such that $E_x\sup_{t\le\tau_k}|u(X_t)|<\infty$ for q.e. $x\in E$  and $\tau_k\nearrow \zeta$ q.a.s. we have
\[
E_x(u(X_{\tau_k}))\rightarrow R\mu_c(x)\quad\mbox{q.e. }x\in E.
\]
\end{enumerate}
\end{definition}

Any sequence $\{\tau_k\}$ of $\mathbb F$-stopping times such that $E_x\sup_{t\le\tau_k}|u(X_t)|<\infty$ for q.e. $x\in E$ and $\tau_k\nearrow \zeta$ q.a.s.
is called the reducing sequence for $u$.

\begin{remark}
From  \cite[Proposition 3.12]{K:CVPDE} it follows that any probabilistic solution to (\ref{eqi.1}) is quasi-continuous.
\end{remark}

By \cite[Propositions 3.12, 4.12]{K:CVPDE} we have the following result.

\begin{proposition}
\label{prop3.eq1}
Let $\mu\in\MM_b(E)$.
\begin{enumerate}
\item[\rm(i)] If $u$ is a probabilistic solution to \mbox{\rm(\ref{eqi.1})}, then $u$ is an integral solution to \mbox{\rm(\ref{eqi.1})}.
\item[\rm(ii)] If $u$ is an integral solution to \mbox{\rm(\ref{eqi.1})}, then $\tilde u$ defined by \mbox{\rm(\ref{eq3.etfr})} is a probabilistic solution to \mbox{\rm(\ref{eqi.1})}.
\item[\rm(iii)] $u$ is an integral solution to \mbox{\rm(\ref{eqi.1})} iff it is  a duality solution to \mbox{\rm(\ref{eqi.1})}.
\end{enumerate}
\end{proposition}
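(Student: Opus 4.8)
The plan is to treat the three assertions separately, starting with the self-contained equivalence (iii) and then linking the probabilistic formulation of Definition~\ref{def2.1} to the integral one. Throughout I would freely use the Green-function identity (\ref{eq2.rdgf}) and the fact that, since $(T_t)$ is self-adjoint on $L^2(E;m)$, the transition density may be chosen symmetric, $p(t,x,y)=p(t,y,x)$, so that $G(x,y)=G(y,x)$.

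For (iii), the symmetry of $G$ gives, via Tonelli's theorem, $\langle R\mu,\eta\rangle=\langle\mu,R\eta\rangle$ for every $\eta\in\BB(E)$ with $R|\eta|$ bounded (the boundedness of $R|\eta|$ together with $\mu\in\MM_b(E)$ makes all the integrals finite). If $u$ is an integral solution, i.e.\ $u=R\mu$ $m$-a.e., substituting this identity immediately yields $\langle u,\eta\rangle=\langle\mu,R\eta\rangle$, so $u$ is a duality solution. For the converse I would observe that the duality identity forces $\langle u-R\mu,\eta\rangle=0$ for all admissible $\eta$; since this test class is separating (it contains enough bounded functions, by transience), one concludes $u=R\mu$ $m$-a.e.

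For (i), I would take $E_x$ in the decomposition of Definition~\ref{def2.1}(i) along a reducing sequence $\{\tau_k\}$. Rearranging the decomposition gives $M_t=u(X_t)+A^{\mu_d}_t-u(x)$, and the reducing property $E_x\sup_{t\le\tau_k}|u(X_t)|<\infty$, combined with $R|\mu_d|(x)<\infty$ for q.e.\ $x$ (formula (\ref{eq2.rdgf})), shows that the stopped local martingale $M_{\cdot\wedge\tau_k}$ is uniformly integrable, hence a true martingale with $E_xM_{\tau_k}=0$. Thus $u(x)=E_xu(X_{\tau_k})+E_xA^{\mu_d}_{\tau_k}$; letting $k\to\infty$ and using condition (ii) of Definition~\ref{def2.1} together with dominated convergence $E_xA^{\mu_d}_{\tau_k}\to R\mu_d(x)$ yields $u(x)=R\mu_c(x)+R\mu_d(x)=\int_EG(x,y)\,\mu(dy)$ for q.e., hence $m$-a.e., $x$. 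For (ii), I would instead \emph{construct} $M$ from the potential-theoretic structure of $\tilde u=R\mu_d+R\mu_c$: since $\mu_d$ is smooth, $A^{\mu_d}\in\mathbf A^+_c$ exists and the Fukushima-type decomposition gives that $R\mu_d(X_t)+A^{\mu_d}_t$ is a local MAF, while $R\mu_c(X_t)$ is itself a local MAF on $[0,\zeta)$ because $\mu_c$ is carried by a set of zero capacity that $\BX$ a.s.\ never charges. Setting $M_t:=(R\mu_d(X_t)+A^{\mu_d}_t-R\mu_d(x))+(R\mu_c(X_t)-R\mu_c(x))$ recovers Definition~\ref{def2.1}(i), and condition (ii) follows because along any reducing sequence the potential $R\mu_d(X_{\tau_k})\to0$ (potentials vanish at $\zeta$) while $E_xR\mu_c(X_{\tau_k})=R\mu_c(x)$ by the local martingale property.

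I expect the main obstacle to lie in part (ii), precisely in showing that the concentrated potential $R\mu_c(X_t)$ carries no drift, i.e.\ is a genuine local martingale rather than a strict supermartingale. This is the delicate point where the zero-capacity support of $\mu_c$ must be converted into a probabilistic non-charging statement within the fine potential theory of the Hunt process $\BX$; it is also where the signed nature of $\mu$ and the integrability needed to promote local martingales to true martingales along reducing sequences must be handled with care. These are exactly the facts supplied by \cite[Propositions 3.12, 4.12]{K:CVPDE}, on which the stated result rests.
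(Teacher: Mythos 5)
The first thing to note is that the paper contains no self-contained proof of this proposition: its entire ``proof'' is the citation of \cite[Propositions 3.12, 4.12]{K:CVPDE}, so your attempt should be measured against the arguments of that reference, which you have in essence reconstructed. Your part (iii) is correct: symmetry of $p(t,x,y)$, hence of $G$, plus Tonelli gives $\langle R\mu,\eta\rangle=\langle\mu,R\eta\rangle$ (everything finite since $R|\eta|$ is bounded and $\mu$ is bounded); for the converse you should make the separating class explicit, e.g.\ by transience there is a bounded strictly positive $g_0$ with $Rg_0$ bounded (exactly as in the proof of Lemma \ref{lm2.1}), and testing with $\eta=hg_0$, $h\in\BB_b(E)$, $h\ge 0$, forces $u=R\mu$ $m$-a.e. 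Part (i) is also essentially right: the domination $|M_{t\wedge\tau_k}|\le |u(x)|+\sup_{s\le\tau_k}|u(X_s)|+A^{|\mu_d|}_\zeta$ together with $E_xA^{|\mu_d|}_\zeta=R|\mu_d|(x)<\infty$ for q.e.\ $x$ upgrades the stopped local martingale to a uniformly integrable one, and the limit passage is as you describe. One gap here: you argue ``along a reducing sequence'' without establishing that one exists. Definition \ref{def2.1}(ii) only constrains reducing sequences, it does not produce one; you must construct it by combining a localizing sequence for $M$ with exit times of $u(X)$, and the jump term $|u(X_{\tau_k})|$ has to be controlled through the decomposition itself before the ``reducing'' property $E_x\sup_{t\le\tau_k}|u(X_t)|<\infty$ can be asserted.

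The genuine gap is where you predicted it, in (ii). That $R\mu_d(X_t)+A^{\mu_d}_t$ is a uniformly integrable MAF-martingale is indeed immediate from $R\mu_d(X_t)+A^{\mu_d}_t=E_x[A^{\mu_d}_\zeta\,|\,\FF_t]$. But the assertion that $R\mu_c(X_t)$ is a local martingale on $[0,\zeta)$ does not follow from your remark that $\BX$ never visits the zero-capacity carrier of $\mu_c$: an excessive function composed with $X$ is a priori only a supermartingale, and ruling out a nontrivial increasing part (showing that the Revuz measure of the drift would have to charge a polar set, hence vanishes) is precisely the content of \cite[Proposition 3.12]{K:CVPDE}; note also that the absolute continuity condition is what lets one pass from ``zero capacity'' to ``polar,'' i.e.\ from the analytic support statement to the probabilistic non-charging one. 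Likewise your claim $E_xR\mu_c(X_{\tau_k})=R\mu_c(x)$ ``by the local martingale property'' needs an argument, since optional sampling for local martingales requires uniform integrability; here it follows from the domination $|R\mu_c(X_t)|\le|\tilde u(X_t)|+R|\mu_d|(X_t)$, the reducing property of $\{\tau_k\}$, and the fact that $R|\mu_d|(X_t)\le E_x[A^{|\mu_d|}_\zeta\,|\,\FF_t]$ is dominated by a closed, hence uniformly integrable, martingale. With these two points repaired --- or, as the paper itself does, simply delegated to \cite{K:CVPDE} --- your outline is a faithful reconstruction of the intended proof rather than a different route, and you correctly identified the crux.
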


From now on, we always consider quasi-continuous versions of  solutions to (\ref{eqi.1}) (no matter which one of the definition we consider).

\begin{proposition}
\label{prop3.1}
Let $\mu\in\MM_b(E)$ and $u$ be an integral solution to  (\ref{eqi.1}). Then $\lambda_k(u)\in \MM_{0,b}(E)$  for every $k\in\BR$. Moreover,
\begin{equation}
\label{eq3.7.1}
-Au^+= \mathbf{1}_{\{u>0\}}\cdot \mu_d-\frac12 \lambda_0(u)+\mu_c^+,
\end{equation}
and  for  every $k>0$,
\begin{equation}
\label{eq3.7.2}
-A(u^+\wedge k)=\mathbf{1}_{\{0<u\le k\}}\cdot\mu_d+\frac12\lambda_k(u)-\frac12\lambda_0(u),
\end{equation}
\begin{equation}
\label{eq3.7.3}
-A(u^-\wedge k)=-\mathbf{1}_{\{-k<u\le 0\}}\cdot\mu_d+\frac12\lambda_{-k}(u)-\frac12\lambda_0(u).
\end{equation}
\end{proposition}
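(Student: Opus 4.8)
The plan is to transfer the equation to the level of the process and apply the Tanaka--Meyer formula. Since $u$ is an integral solution, by Proposition~\ref{prop3.eq1}(ii) its quasi-continuous version is a probabilistic solution, so that with $Y_t:=u(X_t)$ we have the decomposition
\[
Y_t=Y_0-A^{\mu_d}_t+M_t,\qquad t\in[0,\zeta),\quad\mbox{q.a.s.},
\]
where $M$ is a local MAF of $\mathbb X$, together with the boundary condition $E_x(u(X_{\tau_k}))\to R\mu_c(x)$ along any reducing sequence. Thus $Y$ is a special semimartingale under $P_x$ for q.e. $x$, with continuous finite-variation part $-A^{\mu_d}$ (recall $A^{\mu_d}$ is a PCAF) and local-martingale part $M$. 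The smoothness of $\lambda_k(u)$ is then immediate: $L^k(Y)$ and ${}^pJ^k(Y)$ are PCAFs, so $l_k(u),j_k(u)\in S$ by the Revuz correspondence. Boundedness of $\lambda_k(u)$ I would obtain from (\ref{eq2.tmf}) applied to $\varphi(x)=|x-k|$: after taking $E_x$ the martingale term drops out, the drift term is dominated by $A^{|\mu_d|}$, and (\ref{eq2.rdgf}) turns the identity into a bound for $R\lambda_k(u)$ in terms of $R|\mu|$; since $|\mu|(E)<\infty$ this controls the total mass.

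Next I would apply (\ref{eq2.tmf}) to the convex functions $\varphi_1(x)=x^+$, $\varphi_2(x)=x^+\wedge k$ and $\varphi_3(x)=x^-\wedge k$, whose distributional second derivatives are $\delta_0$, $\delta_0-\delta_k$ and $\delta_0-\delta_{-k}$, respectively. Substituting $dY=-dA^{\mu_d}+dM$, the integral $\int_0^t\varphi_i'(Y_{s-})\,dY_s$ splits into a local-martingale piece and a drift piece; since $A^{\mu_d}$ is continuous, the left-continuous indicators in $\varphi_i'$ may be replaced by $\varphi_i'(Y_s)$, and $\int_0^t\mathbf 1_{\{Y_s\in B\}}\,dA^{\mu_d}_s=A^{\mathbf 1_{\{u\in B\}}\cdot\mu_d}_t$ produces the restricted diffuse measures. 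The local-time contributions are $A^{l_k(u)}=L^k(Y)$, while the jump contributions $J^k(Y)$ I would replace by their dual predictable projections ${}^pJ^k(Y)=A^{j_k(u)}$ at the cost of the local MAF $J^k(Y)-{}^pJ^k(Y)$, absorbed into the martingale part. Reading off the continuous finite-variation part of $\varphi_i(Y)=(\varphi_i\circ u)(X)$ as $-A^{\sigma_d}$ then yields exactly the diffuse data claimed in (\ref{eq3.7.1})--(\ref{eq3.7.3}), namely $\mathbf 1_{\{u>0\}}\cdot\mu_d-\frac12\lambda_0(u)$, $\mathbf 1_{\{0<u\le k\}}\cdot\mu_d+\frac12\lambda_k(u)-\frac12\lambda_0(u)$ and $-\mathbf 1_{\{-k<u\le0\}}\cdot\mu_d+\frac12\lambda_{-k}(u)-\frac12\lambda_0(u)$.

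It remains to identify the concentrated parts through Definition~\ref{def2.1}(ii). For the bounded functions $u^+\wedge k$ and $u^-\wedge k$, which lie in $D_e(\EE)\cap\BB_b(E)$, the concentrated part vanishes: a bounded $D_e(\EE)$-solution necessarily has purely smooth data, since for a nonzero positive concentrated measure $\sigma_c$ the difference $R\sigma_c=(u^+\wedge k)-R\sigma_d$ could not belong to $D_e(\EE)$. This gives (\ref{eq3.7.2}) and (\ref{eq3.7.3}). The delicate point, which I expect to be the main obstacle, is (\ref{eq3.7.1}): one must show that the concentrated part of $-Au^+$ equals $\mu_c^+$ rather than some other measure. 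Here I would use the Jordan decomposition $\mu=\mu^+-\mu^-$ and write $u^+=R\mu^+-\min(R\mu^+,R\mu^-)$. Because $\mu_c^+$ and $\mu_c^-$ are carried by disjoint sets of zero capacity, the two potentials blow up on disjoint polar sets, so $\min(R\mu^+,R\mu^-)$ is the potential of a smooth measure and its boundary limit along any reducing sequence is $0$; since $E_x(R\mu^+(X_{\tau_k}))\to R\mu_c^+(x)$, we obtain $E_x(u^+(X_{\tau_k}))\to R\mu_c^+(x)$, i.e. the concentrated part is $\mu_c^+$. Combined with the diffuse part from the Tanaka--Meyer computation, this yields (\ref{eq3.7.1}).
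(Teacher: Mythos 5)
Your skeleton---pass to the probabilistic solution via Proposition \ref{prop3.eq1}(ii), apply the Tanaka--Meyer formula to $x^+$, $x^+\wedge k$, $x^-\wedge k$, replace $J^k(u(X))$ by its dual predictable projection at the cost of a local MAF, and read off the data through the Revuz correspondence---is exactly the paper's. The genuine gaps are in how you identify the concentrated parts. For (\ref{eq3.7.2})--(\ref{eq3.7.3}) you assert that $u^+\wedge k\in D_e(\EE)$ and that a nonzero concentrated part $\sigma_c$ would force $R\sigma_c=(u^+\wedge k)-R\sigma_d\notin D_e(\EE)$. This fails twice. First, $u$ is only assumed to be an \emph{integral} solution, so membership of the truncations in $D_e(\EE)$ is not available at this stage (it is the content of the equivalence with renormalized solutions, Proposition \ref{prop4.eq1}, resting on \cite{KR:MM}). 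Second, even granting it, no contradiction follows: for a bounded smooth measure $\sigma_d$ the potential $R\sigma_d$ is in general \emph{not} in $D_e(\EE)$ (this is precisely why renormalized solutions exist at all), so from $u^+\wedge k\in D_e(\EE)$ you can conclude nothing about the difference $R\sigma_c$; the identity $R\sigma_d+R\sigma_c\in D_e(\EE)$ may well hold with neither summand in $D_e(\EE)$. The paper sidesteps this entirely: since $u^+\wedge k$ is bounded, $E_x(u^+\wedge k)(X_{\tau_n})\rightarrow 0$ for q.e.\ $x$, so taking expectations in the stopped Tanaka--Meyer identity produces (\ref{eq3.2}) with no concentrated term, and boundedness of $\lambda_k(u)$ then follows from the comparison $u^+\wedge k\le u^+$ together with \cite[Lemma 4.6]{K:CVPDE}.

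For (\ref{eq3.7.1}) your algebraic decomposition $u^+=R\mu^+-\min(R\mu^+,R\mu^-)$ is correct, but both pillars of the ensuing argument are unproven and amount to the cited result itself. That $\min(R\mu^+,R\mu^-)$ is the potential of a smooth measure does not follow from the heuristic that ``the two potentials blow up on disjoint polar sets'': in this generality the potential of a concentrated measure need not be infinite on its carrier, and identifying the Riesz charge of a minimum of two excessive functions requires a real argument. Moreover the convergence $E_x\big(R\mu^+(X_{\tau_n})\big)\rightarrow R\mu^+_c(x)$, which you quote as known, is exactly the statement of \cite[Theorem 6.3]{K:CVPDE} that the paper invokes at this very step---so you are assuming for $R\mu^+$ the fact you set out to prove for $u^+$; note also that your $\{\tau_n\}$ is a reducing sequence for $u(X)$, not for $R\mu^+(X)$, so even with that theorem in hand you would need to pass to a common reducing sequence. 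The remaining deviations are benign: your total-mass bound via $\varphi(x)=|x-k|$ can be made to work (using $k-|u-k|\le|u|\le R|\mu|$ and excessivity of $R|\mu|$ to control $E_x|u(X_{\tau_n})|$ uniformly), but the step from a q.e.\ inequality between potentials to finiteness of $\lambda_k(u)(E)$ is not automatic---it is again \cite[Lemma 4.6]{K:CVPDE}, which you gloss with ``this controls the total mass.''
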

\begin{proof}
By Proposition \ref{prop3.eq1} and the definition of a probabilistic solution to (\ref{eqi.1}),
\begin{equation}
\label{eq3.1}
u(X_t)=u(X_{\tau_n})+\int_t^{\tau_n} \,dA^{\mu_d}_r-\int_t^{\tau_n}\,dM_r,\quad t\in [0,\tau_n],\, \rm{q.a.s.,}
\end{equation}
for some local MAF $M$ of $\mathbb X$ and reducing sequence $\{\tau_n\}$ for $u(X)$.
By the Tanaka-Meyer formula,
\begin{align*}
u^+(X_t)&=u^+(X_{\tau_n})+\int_t^{\tau_n}\mathbf{1}_{\{u(X_{r-})>0\}} \,dA^{\mu_d}_r
-\frac12\int_t^{\tau_n}\,dL^0_r(u(X))\\
&\quad-\frac12\int_t^{\tau_n}\,dJ^0_r(u(X))-\int_t^{\tau_n}\mathbf{1}_{\{u(X_{r-})>0\}} \,dM_r,\quad t\in [0,\tau_n],\,\rm{q.a.s.}
\end{align*}
Hence
\begin{align*}
u^+(X_t)&=u^+(X_{\tau_n})+\int_t^{\tau_n}\mathbf{1}_{\{u(X_{r-})>0\}} \,dA^{\mu_d}_r
-\frac12\int_t^{\tau_n}\,dL^0_r(u(X))\\
&\quad-\frac12\int_t^{\tau_n}\,d( ^p[J^0_r(u(X))])-\int_t^{\tau_n} \,dN_r,\quad t\in [0,\tau_n],\,\rm{q.a.s.}
\end{align*}
for some local MAF $N$ of $\mathbb X$.
Taking the expectation of  both sides of the above equation and then letting $n\rightarrow \infty$ and using (\ref{eq2.rdgf}) and \cite[Theorem 6.3]{K:CVPDE} we  obtain
\[
u^+(x)=R(\mathbf{1}_{\{u>0\}}\cdot\mu_d)(x)-\frac12R(\lambda_0(u))(x)+R\mu^+_c(x),\quad \mbox{q.e. }x\in E.
\]
This implies (\ref{eq3.7.1}). Furthermore, (\ref{eq3.7.1}) when combined with
\cite[Lemma 4.6]{K:CVPDE} implies that $\lambda_0(u)\in \MM_b(E)$. Applying the Tanaka-Meyer formula to (\ref{eq3.1}) (with the function $\varphi(x)=x^+\wedge k$) yields
\begin{align*}
(u^+\wedge k)(X_t)&=(u^+\wedge k)(X_{\tau_n})+\int_t^{\tau_n}\mathbf{1}_{\{0<u(X_{r-})\le k\}} \,dA^{\mu_d}_r-\frac12\int_t^{\tau_n}\,dL^0_r(u(X))\\
&\quad-\frac12\int_t^{\tau_n}\,d( ^p[J^0_r(u(X))])
+\frac12\int_t^{\tau_n}\,dL^k_r(u(X))\\
&\quad+\frac12\int_t^{\tau_n}\,d( ^p[J^k_r(u(X))])-\int_t^{\tau_n} \,dN_r,\quad t\in [0,\tau_n],\,\rm{q.a.s.}
\end{align*}
for some MAF $N$ of $\mathbb X$. Since $u^+\wedge k$ is bounded,  $E_x(u^+\wedge k)(X_{\tau_n})\rightarrow 0$ for q.e. $x\in E$. Therefore taking the expectation of  both sides of the above equation and applying (\ref{eq2.rdgf}) yields
\begin{equation}
\label{eq3.2}
(u^+\wedge k)(x)=R(\mathbf{1}_{\{0<u\le k\}}\cdot\mu_d)(x)-\frac12R(\lambda_0(u))(x)+\frac12R(\lambda_k(u))(x)
\end{equation}
for q.e. $x\in E$, which shows (\ref{eq3.7.2}). Since $u^+\wedge k\le u^+$, we have
\begin{align*}
& R(\mathbf{1}_{\{0<u\le k\}}\cdot\mu_d)-\frac12R(\lambda_0(u))+\frac12R(\lambda_k(u))\\
&\qquad \le R(\mathbf{1}_{\{u>0\}}\cdot\mu_d)-\frac12R(\lambda_0(u))+R\mu^+_c\quad \mbox{q.e.}
\end{align*}
By this  and \cite[Lemma 4.6]{K:CVPDE} again,  $\lambda_k(u)\in \MM_b(E)$ for $k>0$. Using the same argument, but with the function $\varphi(x)=x^-\wedge k$, we show that
$\lambda_k(u)\in \MM_b(E)$ for $k<0$ and  (\ref{eq3.7.3}) is satisfied.
\end{proof}

\begin{corollary}
\label{eq3.sr1}
Let $u$ be an integral solution to \mbox{\rm(\ref{eqi.1})}. Then
\[
\nu_k=-\mathbf{1}_{\{u>k, u\le -k\}}\mu_d+\frac12(\lambda_k(u)-\lambda_{-k}(u)),\quad k>0.
\]
\end{corollary}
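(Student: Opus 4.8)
The plan is to reduce the statement to Proposition \ref{prop3.1} by expressing the symmetric truncation $T_k(u)$ through the one-sided truncations already treated there. First I would record the elementary pointwise identity
\[
T_k(u)=(u^+\wedge k)-(u^-\wedge k),\qquad k>0,
\]
which one verifies by inspecting the four regions $\{u>k\}$, $\{0<u\le k\}$, $\{-k\le u\le 0\}$ and $\{u<-k\}$ separately. Since Proposition \ref{prop3.1} guarantees that $u^+\wedge k$ and $u^-\wedge k$ are representable as potentials of bounded measures and that $\lambda_k(u)\in\MM_{0,b}(E)$ for every $k$, the right-hand side lies in $D_e(\EE)$, so $T_k(u)\in D_e(\EE)$ and the measure $\nu_k$ is determined by the defining relation $-AT_k(u)=\mu_d+\nu_k$, that is $\nu_k=-AT_k(u)-\mu_d$.

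Next I would apply $-A$ to the decomposition and combine linearity with formulas (\ref{eq3.7.2}) and (\ref{eq3.7.3}):
\begin{align*}
-AT_k(u)&=[-A(u^+\wedge k)]-[-A(u^-\wedge k)]\\
&=\Big(\mathbf{1}_{\{0<u\le k\}}\cdot\mu_d+\tfrac12\lambda_k(u)-\tfrac12\lambda_0(u)\Big)\\
&\quad-\Big(-\mathbf{1}_{\{-k<u\le 0\}}\cdot\mu_d+\tfrac12\lambda_{-k}(u)-\tfrac12\lambda_0(u)\Big).
\end{align*}
The two copies of $\tfrac12\lambda_0(u)$ cancel, and the two indicator terms merge into $\mathbf{1}_{\{-k<u\le k\}}\cdot\mu_d$, whence
\[
-AT_k(u)=\mathbf{1}_{\{-k<u\le k\}}\cdot\mu_d+\tfrac12\big(\lambda_k(u)-\lambda_{-k}(u)\big).
\]

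Finally, subtracting $\mu_d$ and using $\mathbf{1}_{\{-k<u\le k\}}-1=-\mathbf{1}_{\{u>k\}\cup\{u\le -k\}}$ (which for $k>0$ is precisely the tail region abbreviated $\{u>k,\,u\le -k\}$ in the statement) gives
\[
\nu_k=-\mathbf{1}_{\{u>k,\,u\le -k\}}\cdot\mu_d+\tfrac12\big(\lambda_k(u)-\lambda_{-k}(u)\big),
\]
as claimed, with the boundedness and smoothness of $\nu_k$ inherited directly from Proposition \ref{prop3.1}. I do not expect any genuine obstacle here: the entire analytic content sits in Proposition \ref{prop3.1}, and the only points demanding care are the case-by-case check of the truncation identity and the bookkeeping that rewrites the complement of $\{-k<u\le k\}$ in the compressed set notation used throughout the paper.
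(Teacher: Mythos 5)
Your proof is correct and matches the paper's own argument, which is exactly the one-line observation that the claim ``follows from Proposition \ref{prop3.1} and the fact that $T_k(u)=u^+\wedge k-u^-\wedge k$''; you have simply written out the cancellation of the $\frac12\lambda_0(u)$ terms and the merging of the indicators that the paper leaves implicit. Your reading of the compressed notation $\{u>k,\,u\le-k\}$ as the union $\{u>k\}\cup\{u\le-k\}$ is also the intended one.
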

\begin{proof}
Follows from Proposition \ref{prop3.1} and the fact that $T_k(u)=u^+\wedge k-u^-\wedge k$.
\end{proof}

Now we are going to show  that any integral solution to (\ref{eqi.1})  is the so-called very weak solution to (\ref{eqi.1}). Set
\[
F:=\{\eta\in D(A):-A\eta\in L^{\infty,+}(E;m)\}.
\]
In the definition of very weak solution we require that  test functions are defined in each point of $E$ and not only $m$-a.e. or q.e.
In many cases one would take $F\cap C_b(E)$. However, in general, it may happen that $D(A)\cap C_b(E)=\{0\}$. Therefore as test functions we take excessive $m$-versions of elements of $F$. Such versions are finely-continuous, so are defined everywhere. That each $\eta\in F$ have an excessive $m$-version $\check\eta$ follows form \cite[Lemma 2.1.1]{FOT}.

\begin{definition}
\label{def.4.vws}
We say that $u\in L^1(E;m)$ is a very weak solution to (\ref{eqi.1}) if
\[
\langle u,-A\eta\rangle=\langle \mu,\check \eta\rangle,\quad \eta\in F-F.
\]
\end{definition}

\begin{lemma}
\label{lm3.5}
Let $\eta\in D(A)$ be a bounded excessive function. Then
\[
R(-A\eta)=\eta.
\]
\end{lemma}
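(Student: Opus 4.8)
The plan is to combine the $L^2$ semigroup calculus with the transience of $(\EE,D(\EE))$. Since $\eta\in D(A)$, the map $t\mapsto T_t\eta$ is differentiable in $L^2(E;m)$ with $\frac{d}{dt}T_t\eta=T_tA\eta$, so integrating yields the identity $\eta-T_t\eta=\int_0^t T_s(-A\eta)\,ds$ in $L^2(E;m)$ for every $t>0$. Because $\eta$ is excessive we have $T_s\eta=P_s\eta\le\eta$ for all $s$, whence $A\eta=\lim_{s\to0^+}s^{-1}(T_s\eta-\eta)\le0$, i.e. $-A\eta\ge0$ $m$-a.e. Consequently the integrands $T_s(-A\eta)$ are nonnegative, and letting $t\to\infty$ the right-hand side increases monotonically to $\int_0^\infty T_s(-A\eta)\,ds=R(-A\eta)$, which is exactly the $0$-potential of the function $-A\eta\ge0$.

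It remains to identify the limit of the left-hand side, and this is the crux of the argument. Since $\eta$ is excessive, $t\mapsto P_t\eta$ is nonincreasing, hence decreases pointwise to some $h\in\BB^+(E)$ with $0\le h\le\eta$; as $\eta\in L^2(E;m)$, dominated convergence also gives $P_t\eta\to h$ in $L^2(E;m)$. The limit $h$ is invariant, since $P_sh=\lim_tP_{s+t}\eta=h$ for every $s>0$, so $h$ is a fixed point of the strongly continuous semigroup $(T_t)$ on $L^2(E;m)$; therefore $h\in D(A)$ and $Ah=0$. In particular $h\in D(\EE)$ and $\EE(h,h)=(-Ah,h)=0$. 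Now transience enters: the inequality $\int_E|h|g\,dm\le c\sqrt{\EE(h,h)}=0$ together with $g>0$ forces $h=0$ $m$-a.e. Thus $\eta-T_t\eta\to\eta$ in $L^2(E;m)$ as $t\to\infty$.

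Matching the two limits gives $\eta=R(-A\eta)$ $m$-a.e., which is the assertion. The only delicate point is the vanishing of the ``harmonic part'' $h=\lim_tP_t\eta$, and this is precisely where transience is indispensable: without it $\eta$ could carry a nonzero invariant component and the identity would fail (as for constants under Brownian motion on $\BR^d$, $d\ge3$). Everything else---the $L^2$ differentiability of $t\mapsto T_t\eta$, the sign of $A\eta$, and the monotone passage to the potential $R(-A\eta)$---is routine. A fully probabilistic variant is also available: applying the Fukushima decomposition to $\eta\in D(A)$ gives $\eta(X_t)=\eta(X_0)+\int_0^tA\eta(X_s)\,ds+M_t$, and taking $E_x$ leads to $\eta(x)=P_t\eta(x)+E_x\int_0^t(-A\eta)(X_s)\,ds$, after which the same two limits ($P_t\eta\to0$ by transience, the integral $\uparrow R(-A\eta)$ by monotone convergence) close the argument. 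Finally, if an everywhere version is wanted, one observes that both $\eta$ and $R(-A\eta)$ are excessive, hence finely (quasi-)continuous, so the $m$-a.e. equality upgrades to a q.e. one.
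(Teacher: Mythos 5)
Your argument is correct, but it takes a genuinely different route from the paper's. The paper stays entirely on the resolvent side: it writes $R(-A\eta)=\lim_{\alpha\searrow0}R_\alpha(-A\eta)$, uses the Yosida approximation $-A\eta=\lim_{\beta\to\infty}\beta(\eta-\beta R_\beta\eta)$ together with the resolvent equation to obtain $R_\alpha(-A\eta)=\eta-\alpha R_\alpha\eta$, and then kills the term $\alpha R_\alpha\eta$ as $\alpha\searrow0$ by invoking the representation $\eta=R\xi$ for some $\xi\in\BB^+(E)$, whence $\alpha R_\alpha R\xi=R\xi-R_\alpha\xi\rightarrow0$. You work instead on the semigroup side, via $\eta-T_t\eta=\int_0^t T_s(-A\eta)\,ds$, and you eliminate the harmonic part $h=\lim_{t\rightarrow\infty}P_t\eta$ by hand: $h$ is a fixed point of $(T_t)$, hence $h\in D(A)$ with $Ah=0$, hence $\EE(h,h)=0$, and the transience inequality $\int_E|h|g\,dm\le c\sqrt{\EE(h,h)}$ with $g>0$ forces $h=0$. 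The two proofs are Laplace-transform duals of one another ($\alpha R_\alpha\eta\rightarrow0$ versus $P_t\eta\rightarrow0$), but yours is arguably more self-contained: the paper asserts the potential representation $\eta=R\xi$ without justification (a Riesz-decomposition fact whose content is precisely the vanishing of the harmonic part), whereas you derive that vanishing directly from the defining inequality of transience stated in Section \ref{sec2}. Two small corrections to your commentary, neither affecting the proof. First, your illustrative aside is miscalibrated: Brownian motion on $\BR^d$ with $d\ge3$ is transient, and constants fail to lie in $L^2(\BR^d)$ in any dimension; what actually shows transience is indispensable is a conservative (recurrent) example, e.g.\ $E$ compact with $\eta\equiv1\in D(A)$, $A1=0$, $R(-A\eta)=0\ne\eta$. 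Second, you stop at a q.e.\ identity, but the lemma is applied in Proposition \ref{prop3.4} by pairing $R(-A\check\eta)=\check\eta$ against measures $\mu$ whose concentrated part may charge sets of capacity zero, so the everywhere identity is what is really used; your closing excessivity remark yields it at no extra cost, since under the absolute continuity condition $P_tu(x)$ depends only on the $m$-class of $u$ and $u(x)=\lim_{t\searrow0}P_tu(x)$ pointwise for excessive $u$, so the $m$-a.e.\ equality of the two excessive functions $\eta$ and $R(-A\eta)$ upgrades to equality at every point of $E$.
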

\begin{proof}
Since $\eta\in Exc$, $-A\eta\ge 0$.
By the very definition of the operator $R$,
\[
R(-A\eta)=\sup_{\alpha>0}R_\alpha(-A\eta)=\lim_{\alpha\searrow 0} R_\alpha(-A\eta).
\]
By the resolvent identity,
\[
R_\alpha(-A\eta)=\lim_{\beta\rightarrow \infty} \beta R_\alpha(\eta-\beta R_\beta\eta)= \lim_{\beta\rightarrow \infty}( \beta R_\beta\eta -\alpha\beta R_\beta R_\alpha \eta).
\]
Since $\eta$ is bounded and excessive,
\[
\lim_{\beta\rightarrow \infty}( \beta R_\beta\eta -\alpha\beta R_\beta R_\alpha \eta)=\eta-\alpha R_\alpha\eta,
\]
so $R_\alpha(-A\eta)=\eta-\alpha R_\alpha\eta$.
Since $\eta$ is bounded and excessive and $\eta\in D(A)$, there exists $\xi\in\BB^+(E)$
such that $\eta=R\xi$. Hence
$\alpha R_\alpha\eta=\alpha R_\alpha R\xi=R\xi-R_\alpha\xi\rightarrow0$
as $\alpha\searrow0$. Thus $\lim_{\alpha\searrow 0} R_\alpha(-A\eta)=\eta$, and the proof is complete.
\end{proof}

\begin{proposition}
\label{prop3.4}
Let $\mu\in \MM_{b}(E)$ and $u\in L^1(E;m)$ be an integral   solution to \mbox{\rm(\ref{eqi.1})}. Then $u$ is a very weak solution to \mbox{\rm(\ref{eqi.1})}.
\end{proposition}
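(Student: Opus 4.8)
The plan is to exploit the integral representation $u=R\mu$ together with the symmetry of the Green function, transferring the operator $-A$ off the test function and onto the Green operator, and then to recognize $R(-A\eta)$ as the excessive version $\check\eta$ by means of Lemma \ref{lm3.5}. By linearity it suffices to treat a single $\eta\in F$; write $g:=-A\eta$, so that $g\in L^{\infty,+}(E;m)$ and, since $g\ge 0$, the function $\eta$ is excessive up to an $m$-version (because $P_t\eta-\eta=-\int_0^tP_sg\,ds\le0$ and $P_t\eta\to\eta$).

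First I would record that, $u$ being an integral solution, $u(x)=R\mu(x)=\int_EG(x,y)\,\mu(dy)$ for $m$-a.e.\ $x$, and that by Proposition \ref{prop3.eq1}(iii) $u$ is simultaneously a duality solution. The cleanest route is then to feed $g$ into the duality identity of Definition \ref{def2.1d}: provided $R|g|=Rg$ is bounded, one obtains at once
\[
\langle u,-A\eta\rangle=\langle u,g\rangle=\langle\mu,Rg\rangle .
\]
It then remains to identify $Rg$ with $\check\eta$. Since $\eta\in D(A)$ is excessive up to version, Lemma \ref{lm3.5} gives $R(-A\eta)=\check\eta$; in particular $Rg$ coincides q.e.\ with the everywhere-defined, finely continuous excessive representative, which is exactly the object paired against $\mu$ in Definition \ref{def.4.vws}. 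Combining the two displays yields $\langle u,-A\eta\rangle=\langle\mu,\check\eta\rangle$, and summing over the two pieces of an element of $F-F$ completes the argument. If one prefers to bypass the duality reduction, the same conclusion follows directly from Tonelli's theorem applied to the symmetric kernel $G$: decomposing $\mu=\mu^+-\mu^-$ and using $G(x,y)=G(y,x)$,
\[
\int_E R\mu(x)\,g(x)\,m(dx)=\int_E\Big(\int_E G(x,y)g(x)\,m(dx)\Big)\mu(dy)=\int_E Rg(y)\,\mu(dy),
\]
which is again $\langle\mu,Rg\rangle=\langle\mu,\check\eta\rangle$.

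The main obstacle is the bookkeeping needed to make these interchanges legitimate and, above all, to control the pointwise values of the test function against the concentrated part $\mu_c$. Because $\mu_c$ may charge sets of zero capacity, it is not enough to know $Rg=\eta$ only $m$-a.e.; one must work with the genuinely everywhere-defined excessive version $\check\eta=Rg$ (this is precisely why Definition \ref{def.4.vws} uses $\check\eta$), and check that Lemma \ref{lm3.5} identifies this very version. A related technical point is that Lemma \ref{lm3.5} is stated for a \emph{bounded} excessive $\eta$, whereas a general $\eta\in F$ need only satisfy $-A\eta\in L^{\infty,+}(E;m)$, so that $Rg=\check\eta$ may fail to be bounded and $g$ may fail to be an admissible duality test function. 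I would handle this either by restricting to the class for which $Rg$ is bounded (so that $g$ is directly admissible in Definition \ref{def2.1d}) or, in general, by the monotone approximation $R_\alpha g\nearrow Rg$ as $\alpha\searrow0$, passing to the limit on the left-hand side using $u\in L^1(E;m)$ together with $g\in L^{\infty,+}(E;m)$, and on the right-hand side using that $\mu$ is a bounded measure, with the Tonelli computation above guaranteeing the requisite integrability $\langle|\mu|,\check\eta\rangle=\langle g,R|\mu|\rangle<\infty$.
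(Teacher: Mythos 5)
Your proposal follows the paper's own proof essentially verbatim: the paper writes the single chain $\langle u,-A\eta\rangle=\langle R\mu,-A\check\eta\rangle=\langle\mu,R(-A\check\eta)\rangle=\langle\mu,\check\eta\rangle$, in which the middle equality is exactly your duality/Tonelli interchange (via Proposition \ref{prop3.eq1}(iii) or the symmetry of $G$) and the last equality is Lemma \ref{lm3.5} applied to the excessive version $\check\eta$. Your additional care --- noting that Lemma \ref{lm3.5} is stated for \emph{bounded} excessive functions while a general $\eta\in F$ need not be bounded, and checking integrability of $\check\eta$ against $\mu$ --- addresses points the paper passes over silently, so it is a faithful (indeed slightly more scrupulous) rendering of the same argument.
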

\begin{proof}
Let $\eta\in F$. By the remark preceding Definition \ref{def.4.vws}, there exists an $m$-version  $\check \eta$ of $\eta$ such that $\check \eta$ is excessive.
Applying Lemma \ref{lm3.5} we have
\[
\langle u,-A\eta\rangle =\langle R\mu,-A\check \eta\rangle =\langle \mu,R(-A\check \eta)\rangle =\langle \mu,\check\eta\rangle,
\]
which proves the proposition.
\end{proof}

\section{Renormalized solutions for general measure data}
\label{sec4}

In this section, we consider two equivalent definitions of renormalized solution to (\ref{eqi.1}) and we study their relations to other concepts of solutions considered in Section \ref{sec3}.  The first definition was introduced in \cite{KR:MM}. The second is new. Its advantage over the first one is that it is simpler because it does not involve using the notion of the potential of the measure.

\subsection{First definition}

\begin{definition}
\label{def2.2}
Let $\mu\in\MM_b(E)$. A Borel measurable function $u$ on $E$ is called a renormalized solution to (\ref{eqi.1}) if
\begin{enumerate}
\item [(i)] $T_k(u)\in D_e(\EE)$ for every $k>0$,
\item [(ii)] for every $k>0$ there exists   $\{\nu_k\}\subset \MM_{0,b}(E)$ such that
\[
\EE(T_k(u),\eta)=\langle \mu_d,\eta\rangle+\langle \nu_k,\eta\rangle,\quad \eta\in D_e(\EE)\cap \BB_b(E),
\]
\item[(iii)]  $R\nu_k\rightarrow R\mu_c$ $m$-a.e.
\end{enumerate}
\end{definition}

\begin{remark}
\label{rem4.tvc}
In \cite{KR:NoDEA} it is shown that if $\mu\in\MM_{0,b}(E)$ and $u$ is a renormalized solution to (\ref{eqi.1}), then $\|\nu_k\|_{TV}\rightarrow 0$ as $k\rightarrow\infty$, where $\|\nu_k\|_{TV}=|\nu_k|(E)$.
\end{remark}

By \cite[Theorem 4.4]{KR:MM}, each renormalized solution to (\ref{eqi.1}) has an $m$-version which is quasi-continuous. From now on we always consider
quasi-continuous versions of renormalized solutions to (\ref{eqi.1}).

\begin{proposition}
\label{prop4.eq1}
Let $\mu\in\MM_b(E)$. Then
\begin{enumerate}
\item[\rm(i)]  $u$ is a renormalized  solution to \mbox{\rm(\ref{eqi.1})} if and only if   it is an integral solution to \mbox{\rm (\ref{eqi.1})}.
\item[\rm(ii)]  There exists at most one renormalized solution to \mbox{\rm(\ref{eqi.1})}.
\end{enumerate}
\end{proposition}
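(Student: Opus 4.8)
The plan is to prove part~(i), the equivalence of renormalized and integral solutions, and then to read off part~(ii) immediately: every integral solution coincides $m$-a.e. with the single function $R\mu(x)=\int_E G(x,y)\,\mu(dy)$, so by (i) any two renormalized solutions agree $m$-a.e., hence q.e. (recall we only consider quasi-continuous versions). Thus all the work lies in (i), which I would split into the two implications; both of them are governed by one identification between the variational formulation and the Green potential, which I expect to be the genuine difficulty.

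For the forward implication (integral $\Rightarrow$ renormalized) I would simply assemble the data produced in Section~\ref{sec3}. Let $u$ be an integral solution. Proposition~\ref{prop3.1} gives $\lambda_k(u)\in\MM_{0,b}(E)$ for every $k$, and subtracting (\ref{eq3.7.3}) from (\ref{eq3.7.2}) yields $-AT_k(u)=\mathbf{1}_{\{-k<u\le k\}}\cdot\mu_d+\frac12(\lambda_k(u)-\lambda_{-k}(u))$, whose right-hand side is exactly $\mu_d+\nu_k$ with $\nu_k\in\MM_{0,b}(E)$ the measure of Corollary~\ref{eq3.sr1}. Since $T_k(u)$ is bounded by $k$ and is the Green potential $R(\mu_d+\nu_k)$ of a bounded smooth measure, the potential theory of bounded Green potentials (finite energy integral, obtained after isolating the positive parts of the relevant measures) gives $T_k(u)\in D_e(\EE)$ and the energy identity $\EE(T_k(u),\eta)=\langle\mu_d+\nu_k,\eta\rangle$ for $\eta\in D_e(\EE)\cap\BB_b(E)$; this is (i)--(ii) of Definition~\ref{def2.2}. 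For (iii) I would use the representation $T_k(u)=R\mu_d+R\nu_k$ together with $T_k(u)\to u=R\mu=R\mu_d+R\mu_c$ $m$-a.e. — valid because $R|\mu|<\infty$ $m$-a.e. by Remark~\ref{rem.aqc} — and cancel the finite term $R\mu_d$ to obtain $R\nu_k\to R\mu_c$ $m$-a.e.

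For the reverse implication (renormalized $\Rightarrow$ integral) the crucial point is the identification $T_k(u)=R(\mu_d+\nu_k)$ $m$-a.e. Granting it, I would let $k\to\infty$: the left-hand side tends to $u$ $m$-a.e., while by condition (iii) and linearity $R\mu_d+R\nu_k\to R\mu_d+R\mu_c=R\mu$ $m$-a.e., so $u=R\mu$ $m$-a.e., i.e. $u$ is an integral solution. Equivalently, one may route through duality solutions: using symmetry of $G$ one rewrites $\langle T_k(u),\eta\rangle=\langle\mu_d,R\eta\rangle+\langle R\nu_k,\eta\rangle$ for $\eta$ with $R|\eta|$ bounded, and (iii) (with a domination argument to pass the limit) yields $\langle u,\eta\rangle=\langle\mu,R\eta\rangle$, which is Definition~\ref{def2.1d}, hence integrality by Proposition~\ref{prop3.eq1}(iii).

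The main obstacle is the lemma behind that identification: if $\sigma\in\MM_{0,b}(E)$ and $v\in D_e(\EE)\cap\BB_b(E)$ satisfy $\EE(v,\eta)=\langle\sigma,\eta\rangle$ for all $\eta\in D_e(\EE)\cap\BB_b(E)$, then $v=R\sigma$ $m$-a.e. The uniqueness half is easy: if $v_1,v_2$ both solve the equation then $\EE(v_1-v_2,\eta)=0$ for all bounded $\eta\in D_e(\EE)$, and since $T_n(w)\to w$ in $(\EE,D_e(\EE))$ the bounded functions are $\EE$-dense, so $\EE(v_1-v_2,v_1-v_2)=0$ and transience forces $v_1=v_2$. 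The difficulty is that $\sigma$ is merely bounded smooth, not of finite energy, so $R\sigma$ need not a priori lie in $D_e(\EE)$ and the classical potential identity does not apply directly. I would overcome this by Jordan-decomposing $\sigma=\sigma^+-\sigma^-$ and choosing a nest $\{F_n\}$ along which $\mathbf{1}_{F_n}\cdot\sigma^{\pm}$ has finite energy integral; then $R(\mathbf{1}_{F_n}\cdot\sigma^{\pm})\in D_e(\EE)$ solve the truncated weak equations and increase to $R\sigma^{\pm}$ $m$-a.e., and one controls the energy of the differences against the bounded solution $v$ to pass to the limit and conclude $v=R\sigma$. This nest approximation together with its convergence estimates is the technical heart of the argument (and exactly the point at which one may instead invoke \cite{KR:MM}). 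Once the lemma is available, both implications of (i) are complete, and (ii) follows at once as explained above.
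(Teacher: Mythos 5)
Your overall architecture reconstructs, rather than parallels, the paper's argument: the paper's own proof of this proposition is a two-line citation (``Follows from Proposition \ref{prop3.eq1} and \cite[Theorem 4.4]{KR:MM}''), so all the substance you supply is a blind reproof of \cite[Theorem 4.4]{KR:MM}. Your reverse implication is essentially sound: granting the key lemma, $T_k(u)=R\mu_d+R\nu_k$ q.e., and Definition \ref{def2.2}(iii) gives $u=R\mu$ $m$-a.e.; moreover your nest proof of the lemma can be completed, e.g.\ by testing $\EE(v-w_n,\eta)=\langle \mathbf{1}_{F_n^c}\cdot\sigma,\eta\rangle$ with $\eta=T_M(v-w_n)$ and using the inequality $\EE(\phi,T_M(\phi))\ge \EE(T_M(\phi),T_M(\phi))$ (which, unlike the equality, survives in the nonlocal case), yielding $T_M(v-w_n)\rightarrow 0$ in $(\EE,D_e(\EE))$ and hence $v=R\sigma$ q.e. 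Note, however, that this argument uses the hypothesis $v\in D_e(\EE)$, which in that direction is supplied by Definition \ref{def2.2}(i). (Your alternative detour through duality solutions also needs an integrability/domination argument for $\langle |u|,\eta\rangle<\infty$ that you do not provide, but it is dispensable.)

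The genuine gap is in the forward implication. You justify $T_k(u)\in D_e(\EE)$ and the identity $\EE(T_k(u),\eta)=\langle \mu_d+\nu_k,\eta\rangle$ by ``the potential theory of bounded Green potentials (finite energy integral, obtained after isolating the positive parts of the relevant measures)''. This step fails as stated: what is bounded is only the \emph{difference} $T_k(u)=R(\mu_d+\nu_k)$, not the potentials of the positive and negative parts. For signed $\sigma\in\MM_{0,b}(E)$ the potentials $R\sigma^{\pm}$ need not be bounded --- already $R(\lambda_0(u))$, which enters \eqref{eq3.7.2}--\eqref{eq3.7.3}, has no reason to be bounded --- so $\sigma^{\pm}$ need not have finite energy integral, and no rearrangement of the identities of Proposition \ref{prop3.1} isolates positive measures with bounded potentials. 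Your nest lemma cannot fill this hole either: there $v\in D_e(\EE)$ is a hypothesis, whereas here it is exactly what must be proved, and the truncated differences $w_m-w_n$ are not uniformly bounded, so the same testing trick does not produce an $\EE$-Cauchy sequence. A correct elementary repair goes through the semigroup/Revuz criterion: with $\sigma=\mu_d+\nu_k$ one has $T_k(u)-P_tT_k(u)=E_\cdot\big(A^{\sigma}_t\big)$ q.e., whence
\begin{equation*}
\frac1t\langle T_k(u)-P_tT_k(u),T_k(u)\rangle \le \frac{k}{t}\,E_m A^{|\sigma|}_t \le k\,\|\mu_d+\nu_k\|_{TV},
\end{equation*}
since $t\mapsto \frac1t E_m A^{|\sigma|}_t$ is nonincreasing ($1$ being excessive) with limit $\langle|\sigma|,1\rangle$; an extended-space characterization of $D_e(\EE)$ then yields both the membership and the energy identity. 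This estimate --- or simply the citation of \cite[Theorem 4.4]{KR:MM} that the paper itself uses --- is what your forward direction is missing.
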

\begin{proof}
Follows from Proposition \ref{prop3.eq1} and  \cite[Theorem 4.4]{KR:MM}.
\end{proof}

Our goal is to show that condition (iii) in Definition \ref{def2.2} may be replaced by the following condition (iii'): $\nu_k\rightarrow \mu_c$ in the narrow topology.

\begin{proposition}
\label{prop3.2}
Let $\mu\in\MM_b(E)$ and $u$ be a renormalized solution to \rm{(\ref{eqi.1})}. Then for every bounded excessive function $\eta$,
\[
\lim_{k\rightarrow\infty}\langle \frac12\lambda_k(u),\eta\rangle=\langle \mu^+_c,\eta\rangle,\qquad
\lim_{k\rightarrow\infty}\langle\frac12\lambda_{-k}(u),\eta\rangle=\langle \mu^-_c,\eta\rangle.
\]
\end{proposition}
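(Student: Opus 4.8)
The plan is to turn the pointwise convergence of the potentials of $\lambda_{\pm k}(u)$, which one reads off from Proposition~\ref{prop3.1}, into the asserted convergence of the measures tested against an excessive $\eta$, using the monotone characterization recalled in Section~\ref{sec2.4}: for a positive Borel measure $\beta$ with $R\beta<\infty$ $m$-a.e. and $\eta$ excessive, the quotient $\frac1t\langle R\beta,\eta-P_t\eta\rangle$ is nonincreasing in $t$ with $\sup_{t>0}\frac1t\langle R\beta,\eta-P_t\eta\rangle=\langle\beta,\eta\rangle$. Throughout, note that $\frac12\lambda_{\pm k}(u)$ is a positive measure in $\MM_{0,b}(E)$ by Proposition~\ref{prop3.1}, and that by Remark~\ref{rem.aqc} all potentials below are finite $m$-a.e.

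First I would record the potential identities. Subtracting (\ref{eq3.2}) from (\ref{eq3.7.1}) and using $u^+-u^+\wedge k=(u-k)^+$ gives, q.e.,
\[
\tfrac12 R\lambda_k(u)=R\mu_c^+-(u-k)^++R(\mathbf 1_{\{u>k\}}\cdot\mu_d),\qquad k>0,
\]
and the analogous computation with $u^-$ (from (\ref{eq3.7.3}) together with $-Au^-=-\mathbf 1_{\{u\le0\}}\cdot\mu_d-\frac12\lambda_0(u)+\mu_c^-$, which follows from (\ref{eq3.7.1}), $u^-=u^+-u$ and $-Au=\mu$) gives
\[
\tfrac12 R\lambda_{-k}(u)=R\mu_c^--(u^--u^-\wedge k)-R(\mathbf 1_{\{u\le-k\}}\cdot\mu_d),\qquad k>0.
\]

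For the upper bound I would fix $t>0$, insert the first identity into $\frac1t\langle\frac12 R\lambda_k(u),\eta-P_t\eta\rangle$, and discard the term $-\frac1t\langle(u-k)^+,\eta-P_t\eta\rangle\le0$ (here $\eta-P_t\eta\ge0$ since $\eta$ is excessive). Bounding the two remaining terms by their suprema in $t$, and dropping the negative-part contribution $R(\mathbf 1_{\{u>k\}}\cdot\mu_d^-)$, yields
\[
\tfrac1t\langle\tfrac12 R\lambda_k(u),\eta-P_t\eta\rangle\le\langle\mu_c^+,\eta\rangle+\langle\mathbf 1_{\{u>k\}}\cdot\mu_d^+,\eta\rangle.
\]
Since the right-hand side is independent of $t$, taking the supremum over $t$ gives $\langle\frac12\lambda_k(u),\eta\rangle\le\langle\mu_c^+,\eta\rangle+\langle\mathbf 1_{\{u>k\}}\cdot\mu_d^+,\eta\rangle$, and the last term tends to $0$ as $k\to\infty$ by dominated convergence, because $u<\infty$ q.e. so $\{u>k\}\downarrow\emptyset$ q.e. and $\mu_d^+$ is smooth. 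Hence $\limsup_k\langle\frac12\lambda_k(u),\eta\rangle\le\langle\mu_c^+,\eta\rangle$.

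For the lower bound I would use $\langle\frac12\lambda_k(u),\eta\rangle\ge\frac1t\langle\frac12 R\lambda_k(u),\eta-P_t\eta\rangle$ for fixed $t>0$ and let $k\to\infty$ inside the right-hand side: the terms $\langle(u-k)^+,\eta-P_t\eta\rangle$ and $\langle R(\mathbf 1_{\{u>k\}}\cdot\mu_d),\eta-P_t\eta\rangle$ vanish by dominated convergence, with $m$-a.e. finite majorants $R(\mu_c^++\mu_d^+)$ and $R|\mu_d|$ (integrable against $\eta-P_t\eta$ since $\langle R\beta,\eta-P_t\eta\rangle\le t\langle\beta,\eta\rangle<\infty$ for positive $\beta$). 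This gives $\liminf_k\langle\frac12\lambda_k(u),\eta\rangle\ge\frac1t\langle R\mu_c^+,\eta-P_t\eta\rangle$, and taking the supremum over $t$ yields $\ge\langle\mu_c^+,\eta\rangle$. Combining the two bounds proves the first limit; the second follows verbatim from the $\lambda_{-k}$ identity, the only change being that the $\mu_d$-term now contributes $\langle\mathbf 1_{\{u\le-k\}}\cdot\mu_d^-,\eta\rangle$, again with the favorable sign. I expect the upper bound to be the delicate step, since it is there that one rules out escape of mass; the mechanism making it work is the sign $-(u-k)^+\le0$ combined with the fact that the supremum formula bounds every single-$t$ quotient of the pieces by their full masses $\langle\mu_c^+,\eta\rangle$ and $\langle\mathbf 1_{\{u>k\}}\cdot\mu_d^+,\eta\rangle$.
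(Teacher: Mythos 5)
Your proof is correct, and it reaches the conclusion by a genuinely different mechanism than the paper, even though both arguments rest on the same two ingredients: the identities of Proposition \ref{prop3.1} (together with $u=R\mu$ from Proposition \ref{prop4.eq1}) and the monotone characterization $\sup_{t>0}\frac1t\langle R\beta,\eta-P_t\eta\rangle=\langle\beta,\eta\rangle$ recalled in Section \ref{sec2.4}. The paper works with the signed measure $\nu^1_k=-\mathbf{1}_{\{u>k\}}\cdot\mu_d+\frac12\lambda_k(u)-\frac12\lambda_0(u)$, expresses $\langle\nu^1_k,\eta\rangle$ through $\lim_{t\to0^+}\frac1t\langle u^+\wedge k,\eta-P_t\eta\rangle$, and then performs its delicate step: interchanging $\lim_{k\to\infty}$ and $\lim_{t\to0^+}$, which it justifies by making both families of limits monotone --- adding the excessive correction $R(\mathbf{1}_{\{0<u\le k\}}\cdot\mu_d^-)+\frac12R(\lambda_0(u))$ to $u^+\wedge k$ so that (\ref{eq3.2}) exhibits the relevant functions as excessive. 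You avoid any interchange of limits altogether: by subtracting (\ref{eq3.2}) from (\ref{eq3.7.1}) you isolate the closed-form potential identity $\frac12R\lambda_k(u)=R\mu_c^+-(u-k)^++R(\mathbf{1}_{\{u>k\}}\cdot\mu_d)$ q.e., and then run a limsup/liminf sandwich: the upper bound drops the nonpositive term $-(u-k)^+$ against $\eta-P_t\eta\ge0$ and bounds each remaining quotient by its total mass uniformly in $t$, while the lower bound fixes $t$, kills the error terms by dominated convergence (with the correct majorants $R(\mu_c^++\mu_d^+)$ and $R|\mu_d|$, integrable against $(\eta-P_t\eta)\,dm$ via $\langle R\beta,\eta-P_t\eta\rangle\le t\langle\beta,\eta\rangle$), and takes the supremum over $t$ at the end. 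Your route buys a more transparent treatment of the step the paper itself must labor over (no monotonicity-in-two-parameters argument, no excessive corrections), at the cost of carrying the explicit $\mu_d^{\pm}$ bookkeeping; the paper's interchange argument, in turn, is the more canonical potential-theoretic maneuver and dovetails with the excessive-function calculus it has already set up. All the auxiliary facts you invoke (finiteness q.e. of the potentials of the bounded measures $\lambda_k(u)$, $\mu_d$, $\mu_c$ via Remark \ref{rem.aqc}; $\mu_d^+(\{u>k\})\to0$ since $u$ is finite q.e. and $\mu_d$ is smooth) are available in the paper, so I see no gap.
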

\begin{proof}
We will prove the first assertion. The proof of the  second one is analogous. Write $\nu^1_k:= -\mathbf{1}_{\{u>k\}}\cdot\mu_d+\frac12\lambda_k(u)-\frac12\lambda_0(u)$. By Proposition \ref{prop3.1},
\[
-A(u^+\wedge k)=\mathbf{1}_{\{u>0\}}\cdot\mu_d+\nu^1_k.
\]
Let $\eta$ be a bounded excessive function. From the above equation,  Proposition \ref{prop4.eq1} and Revuz duality we conclude that
\begin{align*}
\langle\nu^1_k,\eta\rangle&=\lim_{t\rightarrow 0^+}\frac1t E_{\eta\cdot m}\int_0^t \,dA^{\nu^1_k}_r=\lim_{t\rightarrow 0^+}
\frac1t \langle R\nu^1_k-P_tR\nu^1_k,\eta\rangle\\&
=\lim_{t\rightarrow 0^+}\Big[-\frac1t\langle R(\mathbf{1}_{\{u>0\}}\cdot\mu_d)-P_tR(\mathbf{1}_{\{u>0\}}\cdot\mu_d),\eta\rangle\\
&\qquad\qquad\qquad+\frac1t \langle u^+\wedge k-P_t(u^+\wedge k),\eta\rangle\Big]
\\&
=\lim_{t\rightarrow 0^+}\Big[-\frac1tE_{\eta\cdot m}\int_0^t\,dA^{\mathbf{1}_{\{u>0\}}\cdot \mu_d}_r+\frac1t \langle u^+\wedge k,\eta-P_t\eta\rangle\Big]\\&
=-\langle\mathbf{1}_{\{u>0\}}\cdot \mu_d,\eta\rangle+\lim_{t\rightarrow 0^+}\frac1t \langle u^+\wedge k,\eta-P_t\eta\rangle .
\end{align*}
Since $\eta\in Exc$, we deduce from the above equation that $\langle\nu^1_k,\eta\rangle$ is nondecreasing. Therefore we may pass to the limit in the above equation as $k\rightarrow \infty$. We then have
\begin{align}
\label{eq4.1}
&\lim_{k\rightarrow \infty}\lim_{t\rightarrow 0^+}\frac1t \langle u^+\wedge k,\eta-P_t\eta\rangle\nonumber \\ &\quad
=\lim_{k\rightarrow \infty}\lim_{t\rightarrow 0^+}\frac1t\langle u^+\wedge k+R(\mathbf{1}_{\{0<u\le k\}}\cdot\mu^-_d)+\frac12 R(\lambda_0(u)),\eta-P_t\eta\rangle
\nonumber \\
&\qquad-\lim_{k\rightarrow \infty}\lim_{t\rightarrow 0^+}\frac1t\langle R(\mathbf{1}_{\{0<u\le k\}}\cdot\mu^-_d)+\frac12 R(\lambda_0(u)),\eta-P_t\eta\rangle.
\end{align}
It is clear that $R(\mathbf{1}_{\{0<u\le k\}}\cdot\mu^-_d)+\frac12 R(\lambda_0(u))$ is an excessive function.
By (\ref{eq3.2}), $u^+\wedge k+R(\mathbf{1}_{\{0<u\le k\}}\cdot\mu^-_d)+\frac12 R(\lambda_0(u))$ is also an excessive function. Therefore both limits with respect to $t$ on the right-hand side of (\ref{eq4.1}) are nondecreasing.
It is clear that this is also true for limits with respect to $k$. Therefore  we may change the order of the limits in (\ref{eq4.1}). By Proposition \ref{prop3.1}, we then have
\begin{align*}
\lim_{k\rightarrow \infty}\lim_{t\rightarrow 0^+}\frac1t \langle u^+\wedge k,\eta-P_t\eta\rangle &=
\lim_{t\rightarrow 0^+}\lim_{k\rightarrow \infty}\frac1t \langle u^+\wedge k,\eta-P_t\eta\rangle\\
&=\lim_{t\rightarrow 0^+}\frac1t \langle u^+,\eta-P_t\eta\rangle\\
&=\lim_{t\rightarrow 0^+}\frac1t\langle u^+-P_tu^+,\eta\rangle\\
&=\langle \mathbf{1}_{\{u>0\}}\cdot \mu_d-\frac12 \lambda_0(u)+\mu_c^+,\eta\rangle.
\end{align*}
This proves the first assertion.
\end{proof}


\begin{lemma}
\label{lm2.1}
Let $h\in C_b(E)$. Then there exist sequences $\{h^1_n\}, \{h^2_n\}\in \mbox{Exc}-\mbox{Exc}$ such that
\begin{enumerate}
\item[\rm{(i)}] $-\|h\|_\infty\le h^2_n\le h\le h^1_n\le \|h\|_\infty$, $n\ge1$,
\item[\rm{(ii)}] $h^2_n\nearrow h$, $h^1_n\searrow h$ as $n\rightarrow\infty$.
\end{enumerate}
\end{lemma}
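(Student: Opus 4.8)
The plan is to construct both sequences from the resolvent regularizations $\beta R_\beta h$, using two structural facts about $\mathbb X$. First, under the convention $f(\partial)=0$ the constant $\fch$ is excessive: since $P_t\fch(x)=P_x(t<\zeta)\le1$ and $\sup_{t>0}P_t\fch(x)=\lim_{t\downarrow0}P_x(t<\zeta)=P_x(\zeta>0)=1$, we get $\sup_{t>0}P_t\fch=\fch$, so every constant lies in $\mbox{Exc}$ and hence in $\mbox{Exc}-\mbox{Exc}$. Second, for $g\in C_b(E)$ the Fellerian property gives $\beta R_\beta g\in C_b(E)$, sub-Markovianity gives $-\|g\|_\infty\le\beta R_\beta g\le\|g\|_\infty$, and writing $\beta R_\beta g(x)=E_x\,g(X_{\mathbf e_\beta})$ with $\mathbf e_\beta$ an independent exponential time of parameter $\beta$, the right-continuity of paths together with $P_x(\mathbf e_\beta<\zeta)\to1$ yields $\beta R_\beta g\to g$ pointwise as $\beta\to\infty$.

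Next I would show that each such regularization is a difference of bounded excessive functions. Since $AR_\beta=\beta R_\beta-I$, one has $-A(\beta R_\beta g)=\beta\,(g-\beta R_\beta g)$; splitting the right-hand side into its positive and negative parts $f_1:=\beta(g-\beta R_\beta g)^+$ and $f_2:=\beta(g-\beta R_\beta g)^-$ and inverting $-A$ by $R$ gives, at least formally, $\beta R_\beta g=Rf_1-Rf_2$ with $Rf_1,Rf_2\in\mbox{Exc}$. The delicate point is the finiteness and boundedness of the potentials $Rf_i$: the non-decaying part of $g$ contributes nothing to $-A(\beta R_\beta g)$ (it is annihilated by $A$, exactly as constants are), so $f_1,f_2$ inherit enough integrability for $Rf_i$ to be finite. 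I would make this rigorous using the reference function furnished by transience, i.e. a strictly positive $g_0$ with $Rg_0$ bounded, to dominate $f_1$ and $f_2$. This places $\beta R_\beta g\in\mbox{Exc}-\mbox{Exc}$.

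To reach the sharp bound $\|h\|_\infty$ and the two-sided bracketing, set $M:=\|h\|_\infty$ and observe $M\fch\pm h\ge0$. Because $M\fch\in\mbox{Exc}$, the whole statement reduces to the following one-sided claim: for a nonnegative $g\in C_b(E)$ there is an increasing sequence $\phi_n\in\mbox{Exc}-\mbox{Exc}$ with $0\le\phi_n\le g$ and $\phi_n\nearrow g$. Granting this, taking $g=M\fch-h$ and $h^1_n:=M\fch-\phi_n$ gives $h^1_n\searrow h$ with $h\le h^1_n\le M$, while taking $g=M\fch+h$ and $h^2_n:=-M\fch+\phi_n$ gives $h^2_n\nearrow h$ with $-M\le h^2_n\le h$; all these functions lie in $\mbox{Exc}-\mbox{Exc}$ since $M\fch$ is excessive, so (i) and (ii) follow at once.

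The main obstacle is precisely this one-sided claim. The regularizations $\beta R_\beta g$ converge to $g$ but overshoot it (they dominate $g$ only when $g$ is itself excessive), and $\mbox{Exc}-\mbox{Exc}$ is not stable under $\wedge$, so one cannot simply truncate at $g$. I would handle the overshoot by subtracting from $\beta R_\beta g$ the excessive envelope (the r\'eduite) of $(\beta R_\beta g-g)^+$, which is an excessive function lying in the bounded cone, and then force monotonicity and the bound $\phi_n\le g$ by choosing $\beta=\beta_n\uparrow\infty$ together with a Dini-type diagonal argument. I expect this r\'eduite construction, and the verification that the corrected functions stay in $[0,g]\cap(\mbox{Exc}-\mbox{Exc})$, to be the technically heaviest part; for $h\in C_0(E)$ the overshoot tends to zero uniformly and the correction can be taken to be a vanishing multiple of $\fch$, so only the genuinely non-$C_0$ behaviour at infinity requires the envelope.
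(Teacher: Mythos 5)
Your argument has a genuine gap, and it sits exactly where you place the ``technically heaviest part'': the one-sided claim (approximate a nonnegative $g\in C_b(E)$ from below by an increasing sequence in $\mbox{Exc}-\mbox{Exc}$ bounded by $g$) \emph{is} the whole content of the lemma, and the r\'eduite correction you sketch does not deliver it. If $e_\beta$ is the smallest excessive majorant of $(\beta R_\beta g-g)^+$, then indeed $\beta R_\beta g-e_\beta\le\min(\beta R_\beta g,g)\le g$, but nothing controls $e_\beta$ from above: for $h\in C_b(E)\setminus C_0(E)$ the overshoot $(\beta R_\beta g-g)^+$ tends to $0$ only pointwise, not uniformly, and the r\'eduite map is not continuous along bounded pointwise-convergent sequences --- roughly $e_\beta(x)\approx\sup_{\tau\in\TT}E_x\big[(\beta R_\beta g-g)^+(X_\tau)\big]$ can stay bounded away from zero whenever the overshoot persists in regions the process keeps visiting. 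Consequently you obtain neither $\phi_n\ge 0$, nor monotonicity, nor $\phi_n\to g$, and the proposed ``Dini-type diagonal argument'' has no monotone family to act on. A second, independent problem is the preliminary claim $\beta R_\beta g\in\mbox{Exc}-\mbox{Exc}$: writing $\beta R_\beta g=Rf_1-Rf_2$ with $f_i=\beta(g-\beta R_\beta g)^{\pm}$ requires $Rf_i<\infty$, but for a transient process (say Brownian motion on $\BR^3$) and bounded oscillating $g$ with no decay, $f_1$ is a bounded non-decaying function and $Rf_1$ can be identically $+\infty$; the cancellation between $f_1$ and $f_2$ does not make the individual potentials finite, and the transience reference function $g_0$ (with $Rg_0$ bounded) cannot dominate a non-decaying $f_1$, so your proposed rigorization of this step cannot work as stated.

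For contrast, the paper proves the lemma by a \emph{penalized optimal stopping} construction: with $g=R_1g_0$ strictly positive, bounded and finely continuous, it sets $h^1_n(x)=\sup_{\tau\in\TT}E_x\big(-n\int_0^\tau g(X_r)\,dr+h(X_\tau)\big)$ and $h^2_n$ the analogous infimum. Then $h\le h^1_n\le\|h\|_\infty$ (take $\tau=0$) and monotonicity in $n$ are immediate; $h^1_n+nRg$ is excessive by Nagai's optimal stopping theorem, i.e.\ it is the r\'eduite of $nRg+h$, giving the explicit decomposition $h^1_n=(h^1_n+nRg)-nRg\in\mbox{Exc}-\mbox{Exc}$ with both parts bounded; and $h^1_n\searrow h$ because the penalty $n\int_0^\tau g(X_r)\,dr$ forces near-optimal stopping times to tend to $0$ in probability (this is where strict positivity and fine continuity of $g$ enter). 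In other words, the r\'eduite idea can be made to work, but only when applied to the penalized function $nRg+h$: the penalization supplies precisely the quantitative mechanism that makes the envelope collapse onto $h$, which is what your overshoot-correction of $\beta R_\beta g$ is missing.
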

\begin{proof}
Since $(\EE, D(\EE))$ is transient, there exists a strictly positive bounded function $g_0$ such that
$Rg_0$ is bounded (see \cite[Corollary 1.3.6.]{Oshima}). Set $g:= R_1 g_0$. Then $Rg=RR_1g_0=R_1Rg_0\le Rg_0=g$. It is clear that $g$ is bounded,  finely-continuous and strictly positive. For $n\ge1$, we set
\[
h^1_n(x)=\sup_{\tau\in\TT}E_x\Big(-n\int_0^{\tau}g(X_r)\,dr+h(X_\tau)\Big),\quad h^2_n(x)=\inf_{\tau\in\TT}E_x\Big(n\int_0^{\tau}g(X_r)\,dr+h(X_\tau)\Big),
\]
where $\TT$ is the set of all $\mathbb F$-stopping times.  We have
\[
h^2_n(x)=-\sup_{\tau\in\TT}E_x(-n\int_0^{\tau}g(X_r)\,dr+(-h(X_\tau))),
\]
so $-h^2_n$ is defined as  $h^1_n$ but with $h$ replaced by $-h$. Therefore it is enough to prove that $\{h^1_n\}$ has the desired properties. Observe that
\[
h^1_n(x)+nRg(x)=\sup_{\tau\in\TT}E_x((nRg+h)(X_\tau))
\]
Hence, by \cite{Nagai}, $h^1_n+nRg$ is an excessive function. It is clear that $nRg$ is also excessive. Thus $h^1_n\in \mbox{Exc}-\mbox{Exc}$.  By the definition, $\{h^1_n\}$ is nonincreasing. Moreover, with $\tau_0=0$, we have
\begin{align*}
h(x)&=E_x\Big(-n\int_0^{\tau_0}g(X_r)\,dr+h(X_{\tau_0})\Big)\\&
\le h^1_n(x)= \sup_{\tau\in\TT}E_x\Big(-n\int_0^{\tau}g(X_r)\,dr+h(X_\tau)\Big)\le \|h\|_\infty.
\end{align*}
Since $Rg$ is an excessive function and $h$ is continuous, the   process $(nRg+h)(X)$ is c\`adl\`ag
under the  measure $P_x$ for every $x\in E$. For every $\varepsilon>0$ there exists $\tau^x_{n,\varepsilon}\in\TT$ such that
\begin{align}
\label{eq3.3}
&E_x(-n\int_0^{\tau^x_{n,\varepsilon}}g(X_r)\,dr+h(X_{\tau^x_{n,\varepsilon}}))
-\varepsilon\nonumber\\
&\qquad \le h^1_n(x)\le E_x\Big(-n\int_0^{\tau^x_{n,\varepsilon}}g(X_r)\,dr
+h(X_{\tau^x_{n,\varepsilon}})\Big)+\varepsilon.
\end{align}
From this we conclude  that
\begin{equation}
\label{eq4.cont1}
nE_x\int_0^{\tau^x_{n,\varepsilon}}g(X_r)\,dr\le 2\|h\|_\infty+\varepsilon.
\end{equation}
Assume for a moment that we know that the above inequality implies that  $\tau^x_{n,\varepsilon}\rightarrow 0$ in probability $P_x$ as $n\rightarrow \infty$. Then, by continuity of $h$ and (\ref{eq3.3}),
\[
 h^1_n(x)\le E_x(-n\int_0^{\tau^x_{n,\varepsilon}}g(X_r)\,dr+h(X_{\tau^x_{n,\varepsilon}}))+\varepsilon\le E_x(h(X_{\tau^x_{n,\varepsilon}}))+\varepsilon\rightarrow h(x)+\varepsilon.
\]
This implies that $\lim_{n\rightarrow \infty} h^1_n(x)\le h(x)$. Since, $h^1_n\ge h$, we get the desired result. What is left is to show that $\tau^x_{n,\varepsilon}\rightarrow 0$ in probability $P_x$ as $n\rightarrow \infty$. Aiming for a contradiction, suppose that there exist $\varepsilon_1,\varepsilon_2>0$ and a subsequence (still denoted by $(n)$) such that
\[
P_x(\tau^x_{n,\varepsilon}>\varepsilon_1)>\varepsilon_2,\quad n\ge 1.
\]
Since $g$ is strictly positive, there exists $\delta>0$ such that $g(x)\ge2 \delta$.
Set
\[
\sigma^x=\inf\{t>0: |g(X_t)-g(x)|>\delta\}.
\]
Since $g$ is finely-continuous, for any sequence $t_n\searrow 0$,
$\lim_{n\rightarrow\infty}P_x(\sigma^x>t_n)=1$.
Let $t_{n_0}>0$ be such that $P_x(\sigma^x>t_{n_0})\ge 1-\frac{\varepsilon_2}{2}$. Then
$P_x(\tau^x_{n,\varepsilon}>\varepsilon_1,\sigma^x>t_{n_0})\ge\varepsilon_2/{2}$.
Hence
\begin{align*}
E_x\int_0^{\tau^x_{n,\varepsilon}}g(X_r)\,dr&\ge E_x\mathbf{1}_{\{\tau^x_{n,\varepsilon}>\varepsilon_1\}}\int_0^{\varepsilon_1\wedge \sigma^x}g(X_r)\,dr\ge (g(x)-\delta)E_x \mathbf{1}_{\{\tau^x_{n,\varepsilon}>\varepsilon_1\}}\varepsilon\wedge\sigma^x\\&\ge
\varepsilon_1\wedge t_{n_0}(g(x)-\delta)P_x(\tau^x_{n,\varepsilon}>\varepsilon_1,\sigma^x>t_{n_0})\\
&\ge \frac{\varepsilon_2}{2} (g(x)-\delta)\varepsilon_1\wedge t_{n_0},
\end{align*}
in contradiction with (\ref{eq4.cont1}).
\end{proof}

\begin{theorem}
\label{th2.2.2}
Let $u$ be a renormalized solution to \mbox{\rm(\ref{eqi.1})}. Then
\[
\frac 12\lambda_{-k}(u)\rightarrow \mu^-_c,\qquad \frac12\lambda_k(u)\rightarrow \mu^+_c
\]
as $k\rightarrow\infty$ in the narrow topology.
\end{theorem}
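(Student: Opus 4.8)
The plan is to upgrade the convergence against bounded excessive functions already obtained in Proposition \ref{prop3.2} to convergence against every $h\in C_b(E)$, using the two-sided approximation from Lemma \ref{lm2.1} together with the positivity of the measures $\lambda_k(u)$. I will prove $\frac12\lambda_k(u)\to\mu_c^+$ narrowly; the statement for $\frac12\lambda_{-k}(u)\to\mu_c^-$ follows from the second limit in Proposition \ref{prop3.2} by the identical argument.

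First I would record that each $\frac12\lambda_k(u)$ is a positive bounded measure. Indeed, by (\ref{eq2.rdgf1}) it is the Revuz measure of the positive additive functional $\frac12\big(L^k(u(X))+{}^pJ^k(u(X))\big)$, hence positive, and by Proposition \ref{prop3.1} it belongs to $\MM_{0,b}(E)$; positivity is precisely what makes the squeezing argument below work. Next, since Proposition \ref{prop3.2} is stated for a single bounded excessive $\eta$ and the pairing $\langle\cdot,\eta\rangle$ is linear in $\eta$, the convergence $\langle\frac12\lambda_k(u),\eta\rangle\to\langle\mu_c^+,\eta\rangle$ extends at once to every $\eta\in\mbox{Exc}-\mbox{Exc}$ which is a difference of two \emph{bounded} excessive functions. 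This is exactly the class produced by Lemma \ref{lm2.1}: writing $h^1_n=(h^1_n+nRg)-nRg$ and analogously for $h^2_n$, and recalling that $Rg$ is bounded, both approximants are differences of bounded excessive functions.

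Now fix $h\in C_b(E)$ and take $\{h^1_n\},\{h^2_n\}$ from Lemma \ref{lm2.1}, so $h^2_n\le h\le h^1_n$ with $h^2_n\nearrow h$ and $h^1_n\searrow h$. Since $\frac12\lambda_k(u)\ge0$, for each fixed $n$ we have
\[
\langle\tfrac12\lambda_k(u),h^2_n\rangle\le\langle\tfrac12\lambda_k(u),h\rangle\le\langle\tfrac12\lambda_k(u),h^1_n\rangle .
\]
Letting $k\rightarrow\infty$ and applying the extended Proposition \ref{prop3.2} to $h^1_n,h^2_n$ gives
\[
\langle\mu_c^+,h^2_n\rangle\le\liminf_{k\rightarrow\infty}\langle\tfrac12\lambda_k(u),h\rangle
\le\limsup_{k\rightarrow\infty}\langle\tfrac12\lambda_k(u),h\rangle\le\langle\mu_c^+,h^1_n\rangle .
\]
Finally I would let $n\rightarrow\infty$: because $\mu_c^+$ is a finite measure, $\|h^i_n\|_\infty\le\|h\|_\infty$, and $h^1_n\searrow h$, $h^2_n\nearrow h$, dominated convergence yields $\langle\mu_c^+,h^i_n\rangle\to\langle\mu_c^+,h\rangle$, which squeezes the $\liminf$ and $\limsup$ together and gives $\langle\frac12\lambda_k(u),h\rangle\to\langle\mu_c^+,h\rangle$. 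As $h\in C_b(E)$ is arbitrary, this is precisely narrow convergence $\frac12\lambda_k(u)\to\mu_c^+$.

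I expect the only real subtlety to be bookkeeping: verifying that the approximants from Lemma \ref{lm2.1} are genuine differences of \emph{bounded} excessive functions, so that Proposition \ref{prop3.2} may be invoked for them, and checking that we obtain narrow rather than merely vague convergence. The latter is automatic here, since the constant function $1$ is itself excessive, so testing against it already forces convergence of the total masses $\langle\frac12\lambda_k(u),1\rangle\to\langle\mu_c^+,1\rangle$; no separate tightness argument is needed.
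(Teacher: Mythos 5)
Your proof is correct, and it rests on the same two pillars as the paper's own proof --- Proposition \ref{prop3.2} (convergence against bounded excessive functions) and the two-sided approximation of Lemma \ref{lm2.1} --- but it organizes them differently, and in fact more economically. The paper first extracts a vaguely convergent subsequence of $\frac12\lambda_k(u)$ (using the mass bound $\sup_{k\ge1}\langle\lambda_k(u),1\rangle<\infty$ obtained from Proposition \ref{prop3.2} with $\eta=1$), identifies the vague limit as $\mu_c^+$ by sandwiching test functions $h\in C_c(E)$ between the approximants of Lemma \ref{lm2.1}, and only then upgrades vague to narrow convergence by testing against the excessive function $1$ to get convergence of total masses. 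You skip the compactness/subsequence step entirely: since Lemma \ref{lm2.1} applies to every $h\in C_b(E)$, not merely $h\in C_c(E)$, you can run the squeeze directly, with $\liminf$/$\limsup$ in place of an assumed limit, and obtain $\langle\frac12\lambda_k(u),h\rangle\to\langle\mu_c^+,h\rangle$ for all $h\in C_b(E)$ in a single pass --- which is narrow convergence by definition, with no separate identification-of-limit or mass-convergence step. The points you flag as bookkeeping are indeed the only delicate ones, and you handle them correctly: the approximants are genuine differences of \emph{bounded} excessive functions (namely $h^1_n=(h^1_n+nRg)-nRg$ with $Rg$ bounded, as in the proof of Lemma \ref{lm2.1}); Proposition \ref{prop3.2} extends to such differences by linearity because all pairings are finite, $\lambda_k(u)$ being in $\MM_{0,b}(E)$ by Proposition \ref{prop3.1} (applicable since a renormalized solution is an integral solution by Proposition \ref{prop4.eq1}); and positivity of $\lambda_k(u)$ is what validates the squeeze. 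If anything, your closing remark about $1$ being excessive is superfluous in your own scheme, since $1\in C_b(E)$ is already covered by the main argument; it is the paper's route, not yours, that needs it.
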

\begin{proof}
By Proposition  \ref{prop3.2},
\[
\frac12\langle \lambda_k(u),\eta\rangle\rightarrow  \langle \mu^+_c,\eta\rangle
\]
for every bounded $\eta\in Exc$.  In particular, $\sup_{k\ge 1}\langle \lambda_k(u),1\rangle <\infty$.
Therefore, there exists a subsequence (still denoted by $(k)$) and a positive $\nu^1\in \mathcal M_b(E)$ such that $\frac12\lambda_k(u)\rightarrow \nu^1$
in the vague topology. Let $h\in C_c(E)$. Let  $\{h^1_n\}, \{h^2_n\}$
be sequences satisfying properties asserted in Lemma \ref{lm2.1}.  Then
\[
\langle \mu^+_c, h^2_n\rangle=\lim_{k\rightarrow \infty}\langle \lambda_k(u),h^2_n\rangle  \le
\lim_{k\rightarrow \infty}\langle \lambda_k(u),h\rangle= \langle \nu^1, h\rangle
\]
and
\[
\langle \nu^1, h\rangle=\lim_{k\rightarrow \infty}\langle \lambda_k(u),h\rangle\le \lim_{k\rightarrow \infty}\langle \lambda_k(u),h^1_n\rangle = \langle \mu^+_c, h^1_n\rangle.
\]
Consequently, passing to the limit with $n\rightarrow \infty$ yields $\nu^1=\mu^+_c$. Thus
$\frac12\lambda_k(u)\rightarrow \mu^+_c$ in the vague topology. Since $1$ is an excessive function, we also have
$\langle \frac12\lambda_k(u),1\rangle \rightarrow \langle \mu^+_c,1\rangle$, so $\frac12\lambda_k(u)\rightarrow \mu^+_c$ in the narrow topology. The proof of the second convergence is similar, so we omit it.
\end{proof}

\begin{corollary}
\label{col3.1}
Let $u$ be a renormalized solution to \mbox{\rm(\ref{eqi.1})}. Then
\[
\nu_k\rightarrow \mu_c,\qquad |\nu_k|\rightarrow |\mu_c|
\]
as $k\rightarrow\infty$ in the narrow topology.
\end{corollary}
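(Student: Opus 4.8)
The plan is to read off the explicit representation of $\nu_k$ supplied by Corollary \ref{eq3.sr1},
\[
\nu_k=-\mathbf{1}_{\{u>k\}\cup\{u\le -k\}}\cdot\mu_d+\tfrac12\lambda_k(u)-\tfrac12\lambda_{-k}(u),\qquad k>0,
\]
which applies because a renormalized solution is an integral solution by Proposition \ref{prop4.eq1}. Since $u=R\mu$ q.e.\ and $|u|<\infty$ q.e.\ by Remark \ref{rem.aqc}, the set $\{|u|=\infty\}$ is $\mbox{Cap}$-polar, hence $|\mu_d|$-null (as $\mu_d$ is smooth), so the indicator $\mathbf{1}_{\{u>k\}\cup\{u\le -k\}}$ tends to $0$ $|\mu_d|$-a.e.\ as $k\to\infty$. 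For the first assertion I would fix $\eta\in C_b(E)$ and test: the contribution of the first term is dominated by $\|\eta\|_\infty\in L^1(|\mu_d|)$ and vanishes by dominated convergence, while Theorem \ref{th2.2.2} gives $\langle\tfrac12\lambda_k(u),\eta\rangle\to\langle\mu^+_c,\eta\rangle$ and $\langle\tfrac12\lambda_{-k}(u),\eta\rangle\to\langle\mu^-_c,\eta\rangle$. Summing yields $\langle\nu_k,\eta\rangle\to\langle\mu^+_c-\mu^-_c,\eta\rangle=\langle\mu_c,\eta\rangle$, i.e.\ $\nu_k\to\mu_c$ narrowly.

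For $|\nu_k|\to|\mu_c|$ I would reduce everything to convergence of total masses, because narrow convergence by itself does not control $|\nu_k|$. The lower bound $|\mu_c|(E)\le\liminf_k|\nu_k|(E)$ is just lower semicontinuity of the total variation under narrow convergence: for $f\in C_c(E)$ with $|f|\le 1$ one has $\langle\mu_c,f\rangle=\lim_k\langle\nu_k,f\rangle\le\liminf_k|\nu_k|(E)$, and taking the supremum over such $f$ (which computes $|\mu_c|(E)$ for a Radon measure) gives the claim. For the upper bound I would apply the triangle inequality to the three-term decomposition above,
\[
|\nu_k|(E)\le|\mu_d|(\{u>k\}\cup\{u\le -k\})+\tfrac12\lambda_k(u)(E)+\tfrac12\lambda_{-k}(u)(E),
\]
and let $k\to\infty$: the first summand tends to $0$ as already noted, and by Theorem \ref{th2.2.2} applied with the admissible test function $\eta\equiv1$ the two remaining summands tend to $\mu^+_c(E)$ and $\mu^-_c(E)$. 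Hence $\limsup_k|\nu_k|(E)\le|\mu_c|(E)$, and together with the lower bound this yields $|\nu_k|(E)\to|\mu_c|(E)$.

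It then remains to upgrade narrow convergence plus total-mass convergence to narrow convergence of the variations. Since $1\in C_b(E)$, from $\langle\nu_k,1\rangle\to\langle\mu_c,1\rangle$ and $|\nu_k|(E)\to|\mu_c|(E)$ one gets $\nu_k^\pm(E)\to\mu_c^\pm(E)$. Using vague compactness of bounded positive measures on the locally compact separable space $E$, any subsequence of $\{\nu_k^+\}$ (resp.\ $\{\nu_k^-\}$) admits a vaguely convergent further subsequence with limits $\rho^+,\rho^-$ satisfying $\rho^+-\rho^-=\mu_c$ and, by vague lower semicontinuity, $\rho^\pm(E)\le\mu_c^\pm(E)$; minimality of the Jordan decomposition gives $\mu_c^\pm\le\rho^\pm$, forcing $\rho^\pm=\mu_c^\pm$ with no loss of mass, so the vague convergence in fact upgrades to narrow convergence. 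Therefore $\nu_k^\pm\to\mu_c^\pm$ narrowly and $|\nu_k|=\nu_k^++\nu_k^-\to\mu_c^++\mu_c^-=|\mu_c|$ narrowly.

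The step I expect to be the main obstacle is exactly this last passage: because the map $\nu\mapsto|\nu|$ is not continuous for narrow convergence, the whole force of the argument rests on first producing the sharp mass estimate $\limsup_k|\nu_k|(E)\le|\mu_c|(E)$ (via the three-term triangle inequality and Theorem \ref{th2.2.2}) and then running the vague-compactness/Jordan-minimality upgrade. Everything else—the narrow convergence $\nu_k\to\mu_c$ and the dominated-convergence handling of the $\mu_d$-term—is routine once the decomposition of Corollary \ref{eq3.sr1} and the convergences of Theorem \ref{th2.2.2} are available.
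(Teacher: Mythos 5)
Your proof is correct and takes essentially the same route as the paper: the decomposition of Corollary \ref{eq3.sr1}, Theorem \ref{th2.2.2} for the terms $\frac12\lambda_{\pm k}(u)$, the vanishing of the $\mu_d$-term (since $u$ is quasi-continuous and finite q.e.\ and $\mu_d$ charges no polar set), and the two-sided total-mass estimate giving $|\nu_k|(E)\rightarrow|\mu_c|(E)$. The only difference is cosmetic: where the paper twice invokes \cite[Theorem 8.4.7]{Bogachev} (lower semicontinuity of the total variation under narrow convergence, and the upgrade from narrow convergence plus convergence of total masses to narrow convergence of the variations), you prove these two facts inline via vague compactness and minimality of the Jordan decomposition.
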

\begin{proof}
By Proposition \ref{prop4.eq1} and Remark \ref{rem.aqc}, $u$ is quasi-continuous. Therefore $\mathbf{1}_{\{-k<u\le k\}}\cdot\mu_d\rightarrow 0$ in the total variation norm.
Hence,  by Corollary \ref{eq3.sr1} and Theorem \ref{th2.2.2}, $\nu_k\rightarrow \mu_c$ in the narrow topology.
By \cite[Theorem 8.4.7]{Bogachev},
\[
\liminf_{k\rightarrow \infty} |\nu_k|(E)\ge |\mu_c|(E).
\]
On the other hand, using Corollary \ref{eq3.sr1}, Theorem \ref{th2.2.2} and the fact that  $\mathbf{1}_{\{-k<u\le k\}}\cdot\mu_d\rightarrow 0$ in the total variation norm, we get
\[
\limsup_{k\rightarrow \infty} |\nu_k|(E)\le \limsup_{k\rightarrow \infty} \frac12\lambda_k(u)(E)+\limsup_{k\rightarrow \infty} \frac12\lambda_{-k}(u)(E)= |\mu_c|(E).
\]
Thus $\lim_{k\rightarrow \infty} |\nu_k|(E)= |\mu_c|(E)$. From this and
\cite[Theorem 8.4.7]{Bogachev}  we get the desired result.
\end{proof}

\subsection{Second definition}

\begin{definition}
\label{def2.2.cm}
Let $\mu\in\MM_b(E)$.
We say that $u\in\BB(E)$ is a renormalized solution to (\ref{eqi.1}) if
\begin{enumerate}
\item [(i)] $T_k(u)\in D_e(\EE)$ for every $k\ge 0$,
\item [(ii)] for every $k>0$ there exists   $\{\nu_k\}\subset \MM_{0,b}(E)$ such that
\[
\EE(T_k(u),\eta)=\langle \mu_d,\eta\rangle+\langle \nu_k,\eta\rangle,\quad \eta\in D_e(\EE)\cap \BB_b(E),
\]
\item[(iii)]  $\nu_k\rightarrow \mu_c$ in the narrow topology.
\end{enumerate}
\end{definition}

\begin{theorem}
\label{prop3.3}
Let $\mu\in\MM_{b}(E)$.
\begin{enumerate}
\item[\rm(i)] If $u$ is a renormalized solution to \mbox{\rm(\ref{eqi.1})} in the sense of Definition \ref{def2.2}, then $u$ is a renormalized solution
to \mbox{\rm(\ref{eqi.1})} in the sense of Definition \ref{def2.2.cm}.

\item[\rm(ii)] Assume that either $R_\alpha(C_b(E))\subset C_b(E)$ for some (hence for all) $\alpha>0$ and $u\in L^1(E;m)$ or $R_\alpha(\BB_b(E))\subset C_b(E)$ for some (hence for all) $\alpha>0$. If  $u$ is a renormalized solution to
\mbox{\rm(\ref{eqi.1})} in the sense of Definition \ref{def2.2.cm}, then $u$ is a renormalized solution to \mbox{\rm(\ref{eqi.1})} in the sense of Definition \ref{def2.2}.
\end{enumerate}
\end{theorem}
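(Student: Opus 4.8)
The plan is to handle the two implications separately, first observing that conditions (i) and (ii) of Definitions \ref{def2.2} and \ref{def2.2.cm} coincide (the only formal difference is that Definition \ref{def2.2.cm} also requires $T_0(u)=0\in D_e(\EE)$, which is trivial), and that by uniqueness of the measure representing the functional $\eta\mapsto\EE(T_k(u),\eta)-\langle\mu_d,\eta\rangle$ on $D_e(\EE)\cap\BB_b(E)$ the families $\{\nu_k\}$ occurring in the two definitions are the same. Thus in each direction only the third condition must be converted. For part (i) this is immediate: if $u$ satisfies Definition \ref{def2.2}, then by Proposition \ref{prop4.eq1}(i) it is an integral solution, and Corollary \ref{col3.1} gives $\nu_k\to\mu_c$ narrowly, which is precisely condition (iii) of Definition \ref{def2.2.cm}.

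For part (ii) the strategy is to show that a solution $u$ in the sense of Definition \ref{def2.2.cm} is an integral solution, i.e. $u=R\mu$ $m$-a.e.; once this is established, Proposition \ref{prop4.eq1}(i) turns $u$ into a solution in the sense of Definition \ref{def2.2}, and in particular its condition (iii), $R\nu_k\to R\mu_c$ $m$-a.e., holds. Put $\rho_k:=\mu_d+\nu_k$. Since $\nu_k\to\mu_c$ narrowly, $\rho_k\to\mu$ narrowly and, by the Banach--Steinhaus theorem applied to the functionals $\phi\mapsto\langle\rho_k,\phi\rangle$ on $C_b(E)$, $C:=\sup_k|\rho_k|(E)<\infty$. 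Condition (ii) forces the bounded smooth measure $\rho_k$ to be of finite energy, because $|\langle\rho_k,\eta\rangle|=|\EE(T_k(u),\eta)|\le\EE(T_k(u),T_k(u))^{1/2}\EE(\eta,\eta)^{1/2}$ and $D_e(\EE)\cap\BB_b(E)$ is $\EE$-dense in $D_e(\EE)$; hence its Green potential $R\rho_k$ lies in $D_e(\EE)$ and, by Riesz representation in the Hilbert space $(D_e(\EE),\EE)$, $T_k(u)=R\rho_k$ $m$-a.e. I will also use the strictly positive bounded $g_0$ with $Rg_0$ bounded furnished by the transience argument in the proof of Lemma \ref{lm2.1}, and the resolvent identity $R=R_\alpha+\alpha R_\alpha R=R_\alpha+\alpha RR_\alpha$.

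Under the strong Feller hypothesis $R_\alpha(\BB_b(E))\subset C_b(E)$, I would test against $\eta=g_0\psi$ with $\psi\in\BB_b(E)$; then $R|\eta|\le\|\psi\|_\infty Rg_0$ is bounded, and writing $R\eta=R_\alpha\eta+\alpha R_\alpha(R\eta)$ with $\eta,R\eta\in\BB_b(E)$ shows $R\eta\in C_b(E)$. Using $T_k(u)=R\rho_k$ and the symmetry of $R$ one gets $\langle T_k(u),\eta\rangle=\langle\rho_k,R\eta\rangle\to\langle\mu,R\eta\rangle=\langle R\mu,\eta\rangle$ by narrow convergence. On the other hand $\int|T_k(u)|g_0\,dm\le\langle|\rho_k|,Rg_0\rangle\le C\|Rg_0\|_\infty$, so Fatou gives $ug_0\in L^1(E;m)$, and since $|T_k(u)|\le|u|$ and $T_k(u)\to u$ pointwise, dominated convergence gives $\langle T_k(u),\eta\rangle\to\langle u,\eta\rangle$. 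Hence $\langle u,g_0\psi\rangle=\langle R\mu,g_0\psi\rangle$ for every $\psi\in\BB_b(E)$, and since $g_0>0$ we conclude $u=R\mu$ $m$-a.e.

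Under the weaker Feller hypothesis $R_\alpha(C_b(E))\subset C_b(E)$ together with $u\in L^1(E;m)$, the function $R\eta$ need not be continuous for Borel $\eta$, so I would argue through $R_\alpha$ and then let $\alpha\to0$. Fix $\alpha>0$. From $T_k(u)=R\rho_k$ and the resolvent identity, $R_\alpha\rho_k=(I-\alpha R_\alpha)T_k(u)$. As $k\to\infty$ one has $T_k(u)\to u$ in $L^1$ (domination $|T_k(u)|\le|u|\in L^1$), and since $\alpha R_\alpha$ is a contraction on $L^1$, $R_\alpha\rho_k\to u-\alpha R_\alpha u$ in $L^1$; meanwhile, for $\phi\in C_b(E)$, $\langle R_\alpha\rho_k,\phi\rangle=\langle\rho_k,R_\alpha\phi\rangle\to\langle R_\alpha\mu,\phi\rangle$ because $R_\alpha\phi\in C_b(E)$, so comparing limits yields $u-\alpha R_\alpha u=R_\alpha\mu$ $m$-a.e. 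Testing this against $\phi=g_0\psi$ and letting $\alpha\to0$ finishes the argument: $\langle\alpha R_\alpha u,\phi\rangle=\langle u,\alpha R_\alpha\phi\rangle\to0$ since $|\alpha R_\alpha\phi|\le\alpha\|\psi\|_\infty\|Rg_0\|_\infty$ and $u\in L^1$, while $\langle R_\alpha\mu,\phi\rangle=\langle\mu,R_\alpha\phi\rangle\to\langle R\mu,\phi\rangle$ by dominated convergence with respect to the finite measure $|\mu|$, whence $\langle u,g_0\psi\rangle=\langle R\mu,g_0\psi\rangle$ for all $\psi$ and $u=R\mu$ $m$-a.e. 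The main obstacle throughout is exactly this conversion of the narrow convergence $\nu_k\to\mu_c$ into convergence of the Green potentials $R\nu_k$: the natural test function $y\mapsto G(x,y)$ is neither bounded nor continuous, so it cannot be inserted into the narrow limit directly. The Feller and strong Feller hypotheses are used precisely to carry continuity through the resolvent $R_\alpha$, and in the merely Feller case the additional integrability $u\in L^1$ is what legitimizes the passage $\alpha\to0$.
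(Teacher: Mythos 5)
Your proof is correct and follows essentially the same route as the paper's: part (i) is exactly Proposition \ref{prop4.eq1} combined with Corollary \ref{col3.1}, and for part (ii) you, like the paper, first identify $T_k(u)=R(\mu_d+\nu_k)$ q.e., transfer the narrow convergence $\nu_k\rightarrow\mu_c$ through the resolvent using the Feller (resp.\ strong Feller) property, conclude $u=R\mu$ $m$-a.e., and finish by invoking Proposition \ref{prop4.eq1}. The only deviation is organizational: in the strong Feller case the paper pairs the q.e.\ limit $v$ of $R\nu_k$ with $\alpha R_\alpha\eta$, uses the domination $|R\nu_k|\le R|\mu_d|+|u|$ and lets $\alpha\rightarrow\infty$, whereas you observe via the resolvent identity $R=R_\alpha+\alpha R_\alpha R$ that $R(g_0\psi)\in C_b(E)$ and test directly, with Fatou providing $ug_0\in L^1(E;m)$ --- a harmless (and, in the Feller case, slightly more carefully justified) variant of the same computation.
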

\begin{proof}
Assertion (i) follows from Proposition \ref{prop4.eq1}, Corollary \ref{eq3.sr1} and  Theorem \ref{th2.2.2}
(since $u$ is quasi-continuous it is finite q.e.). To prove (ii), assume that $u$
is a renormalized solution to (\ref{eqi.1}) in the sense of Definition \ref{def2.2.cm}.
By Definition \ref{def2.2.cm}(ii),
\begin{equation}
\label{eq4.44sdx1}
T_k(u)=R\mu_d+R\nu_k,\quad k\ge 0,\quad \mbox{q.e.}
\end{equation}
Therefore $\{R\nu_k\}$ converges q.e. as $k\rightarrow \infty$. Let $v$ denote its limit.
Let $\eta\in \BB(E)$ be a bounded positive function such that $R\eta$ is bounded.
Observe that
\[
\langle |u|,\eta\rangle=\sup_{k\ge 1}\langle T_k(u),\mbox{sign}(u)\eta\rangle\le \langle |\mu_d|,R\eta\rangle+\sup_{k\ge 1}\langle |\nu_k|,R\eta\rangle.
\]
Since $\nu_k$ is narrowly convergent, $\sup_{k\ge 1}\langle |\nu_k|,\eta\rangle <\infty$. Therefore $\langle |u|,\eta\rangle<\infty$
for every $\eta\in \BB^+(E)$  such that $R\eta$ is bounded.
We also have
\begin{equation}
\label{eq4.44sdx}
|R\nu_k|\le R|\mu_d|+|u|.
\end{equation}
Assume that $R_\alpha(\BB_b(E))\subset C_b(E)$ for every $\alpha>0$, i.e. $(R_{\alpha})_{\alpha>0}$ has the strong Feller property.
Then, by (\ref{eq4.44sdx}) and the Lebesgue dominated convergence theorem,
\[
\langle v,\alpha R_\alpha\eta \rangle = \lim_{k\rightarrow \infty}\langle R\nu_k,\alpha R_\alpha \eta\rangle=\lim_{k\rightarrow \infty}\langle \nu_k,\alpha R_\alpha R\eta\rangle
=\langle \mu_c,\alpha R_\alpha R\eta\rangle =\langle R\mu_c,\alpha R_\alpha \eta\rangle.
\]
The third equation follows from strong Feller property. Letting $\alpha\rightarrow \infty$
gives
\[
\langle v,\eta \rangle =\langle R\mu_c, \eta\rangle.
\]
 Since $\eta\in\BB(E)$ was an arbitrary positive function
such that $R\eta$ is bounded, $v=R\mu_c$. Therefore, by (\ref{eq4.44sdx1}),
\[
u=R\mu_d+R\mu_c=R\mu,\quad \mbox{q.e.},
\]
so by Proposition \ref{prop4.eq1}, $u$ is a renormalized solution to (\ref{eqi.1}) in the sense of Definition \ref{def2.2}.
Assume now that $R_\alpha(C_b(E))\subset C_b(E)$ for every $\alpha>0$. For every $\eta\in C_b(E)$ we have
\[
\langle T_k(u),\eta\rangle =\langle \alpha R_\alpha T_k(u),\eta\rangle+\langle \mu_d,R_\alpha\eta\rangle+\langle \nu_k, R_\alpha\eta\rangle.
\]
Letting $k\rightarrow \infty$ and then  $\alpha\searrow 0$ shows that
$u=R\mu$.
By Proposition \ref{prop4.eq1} again, $u$ is a renormalized solution to (\ref{eqi.1}) in the sense of Definition \ref{def2.2}.
\end{proof}

\begin{corollary}
Let $\mu\in\MM_b(E)$ and the  assumptions of Theorem \ref{prop3.3}(ii) hold. Then there exists a unique renormalized solution $u$ to \mbox{\rm(\ref{eqi.1})}
in the sense of Definition \ref{def2.2.cm}. Moreover,
\[
u(x)=\int_E G(x,y)\,\mu(dy),\quad m\mbox{-a.e. }x\in E.
\]
\end{corollary}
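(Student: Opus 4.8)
The plan is to deduce everything from the equivalence of the two definitions established in Theorem \ref{prop3.3}, together with the existence/uniqueness theory for the first definition recorded in Proposition \ref{prop4.eq1}. The corollary is essentially a bookkeeping consequence of these results, so the main issue is to chain the implications in the correct direction and to keep track of which hypotheses each direction actually needs.

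First, for existence and for the representation formula, I would produce a concrete solution. Set $u(x)=\int_E G(x,y)\,\mu(dy)$; by Remark \ref{rem.aqc} this is finite q.e. and admits a quasi-continuous $m$-version, so $u$ is by definition an integral solution to (\ref{eqi.1}). Proposition \ref{prop4.eq1}(i) then tells us that $u$ is a renormalized solution in the sense of Definition \ref{def2.2}, and Theorem \ref{prop3.3}(i)---which requires no Feller-type hypotheses---upgrades this to a renormalized solution in the sense of Definition \ref{def2.2.cm}. This single step simultaneously yields existence and exhibits a solution satisfying the asserted formula $u=\int_E G(x,y)\,\mu(dy)$ $m$-a.e.

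For uniqueness I would argue in the reverse direction. Let $u$ be an arbitrary renormalized solution in the sense of Definition \ref{def2.2.cm}. Here I would invoke the standing hypotheses of Theorem \ref{prop3.3}(ii)---either $R_\alpha(C_b(E))\subset C_b(E)$ with $u\in L^1(E;m)$, or $R_\alpha(\BB_b(E))\subset C_b(E)$---to conclude via Theorem \ref{prop3.3}(ii) that $u$ is also a renormalized solution in the sense of Definition \ref{def2.2}. By Proposition \ref{prop4.eq1}(i) it is then an integral solution, whence $u(x)=\int_E G(x,y)\,\mu(dy)$ for $m$-a.e. $x\in E$. Since the right-hand side does not depend on $u$, any two solutions coincide $m$-a.e.; equivalently, one may cite Proposition \ref{prop4.eq1}(ii) directly after the reduction to Definition \ref{def2.2}.

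The substantial work has already been done inside Theorem \ref{prop3.3}, so no genuine obstacle remains at this stage. The one point demanding care is the asymmetry between the two directions: the passage from Definition \ref{def2.2.cm} back to Definition \ref{def2.2}, which underlies both uniqueness and the formula, truly consumes the (strong) Feller assumption, whereas the forward passage used for existence does not. I would therefore apply the hypotheses of the corollary only at the single point where Theorem \ref{prop3.3}(ii) is invoked, and I would note throughout that quasi-continuous representatives are being used, so that the q.e. identities coming from the integral-solution theory translate into the $m$-a.e. statement in the conclusion.
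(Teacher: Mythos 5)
Your proposal is correct and follows essentially the same route as the paper, which states the corollary without proof precisely because it is the immediate chain you describe: existence and the Green-function formula come from taking $u=R\mu$, Proposition \ref{prop4.eq1}(i) and the hypothesis-free direction Theorem \ref{prop3.3}(i), while uniqueness uses Theorem \ref{prop3.3}(ii) to reduce any solution in the sense of Definition \ref{def2.2.cm} back to Definition \ref{def2.2} and hence to the integral solution. Your explicit remark that only the backward direction consumes the (strong) Feller assumption matches the paper's intent exactly.
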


\begin{remark}
Even in the case of local operators Definition \ref{def2.2.cm} of
renormalized solutions to (\ref{eqi.1}) is in some cases more
convenient in applications then the other definitions considered
in \cite{DMOP}. For instance,  Petitta, Ponce and Porretta
\cite{Petitta3} applied formulation of this type to solve
evolution equations with smooth measure data and  absorption on
the right-hand side.
\end{remark}

Let $f:E\times\BR\rightarrow \BR$ be a measurable function.  In
\cite{KR:MM} we have proved  a uniqueness result for solutions, in
the sense of Definition \ref{def2.2}, 
to
semilinear equations (\ref{eqi.123}). Thanks to the equivalence
proved in Theorem \ref{prop3.3} we have the uniqueness result for
solutions to (\ref{eqi.123}) in the sense of Definition
\ref{def2.2.cm}. Let us also note here that the existence of
renormalized solutions to semilinear equations (\ref{eqi.123})
with smooth measure data and $f$ satisfying merely the sign
condition with respect to the second variable was proved in
\cite{KR:NoDEA2}. In the case of general measure data the
existence problem for (\ref{eqi.123}) is a very subtle matter. Its
investigation requires introducing the notion of reduced measures
(see \cite{K:CVPDE}).

\begin{definition}
\label{def2.2.cmsl}
Let $\mu\in\MM_b(E)$.
We say that $u\in\BB(E)$ is a renormalized solution to (\ref{eqi.123}) if
\begin{enumerate}
\item [(i)] $T_k(u)\in D_e(\EE)$ for every $k\ge 0$, and $f(\cdot,u)\in L^1(E;m)$,
\item [(ii)] for every $k>0$ there exists   $\{\nu_k\}\subset \MM_{0,b}(E)$ such that
\[
\EE(T_k(u),\eta)=\langle f(\cdot,u),\eta\rangle+\langle \mu_d,\eta\rangle+\langle \nu_k,\eta\rangle,\quad \eta\in D_e(\EE)\cap \BB_b(E),
\]
\item[(iii)]  $\nu_k\rightarrow \mu_c$ in the narrow topology.
\end{enumerate}
\end{definition}

\begin{theorem}
\label{prop3.3sl}
Let $\mu\in\MM_{b}(E)$ and $f$ be non-increasing with respect to the second variable.
\begin{enumerate}
\item[\rm(i)] If  $R_\alpha(C_b(E))\subset C_b(E)$ for some  $\alpha>0$, then there exists at most one
renormalized solution   $u\in L^1(E;m)$ to \mbox{\rm(\ref{eqi.123})} in the sense of Definition \ref{def2.2.cmsl}.

\item[\rm(ii)] If $R_\alpha(\BB_b(E))\subset C_b(E)$ for some $\alpha>0$, then there exists at most one  renormalized solution to \mbox{\rm(\ref{eqi.123})} in the sense of Definition \ref{def2.2.cmsl}.
\end{enumerate}
\end{theorem}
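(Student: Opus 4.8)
The plan is to linearise the problem by freezing the nonlinearity and then to exploit the machinery already developed for the linear equation. Let $u$ be any renormalized solution to (\ref{eqi.123}) in the sense of Definition \ref{def2.2.cmsl} and set $\tilde\mu:=f(\cdot,u)\cdot m+\mu$. Since $f(\cdot,u)\in L^1(E;m)$ by condition (i), the measure $f(\cdot,u)\cdot m$ is bounded and smooth, so $\tilde\mu\in\MM_b(E)$ with diffuse part $\tilde\mu_d=f(\cdot,u)\cdot m+\mu_d$ and concentrated part $\tilde\mu_c=\mu_c$. Comparing conditions (ii)--(iii) of Definitions \ref{def2.2.cmsl} and \ref{def2.2.cm} then shows, with the same family $(\nu_k)$, that $u$ is a renormalized solution to the \emph{linear} equation $-Au=\tilde\mu$ in the sense of Definition \ref{def2.2.cm}.

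First I would invoke the equivalence theorem on this linearised problem. In case (i) we are given $R_\alpha(C_b(E))\subset C_b(E)$ together with $u\in L^1(E;m)$, and in case (ii) the strong Feller property $R_\alpha(\BB_b(E))\subset C_b(E)$; these are exactly the two hypotheses under which Theorem \ref{prop3.3}(ii) applies. Hence $u$ is also a renormalized solution to $-Au=\tilde\mu$ in the sense of Definition \ref{def2.2}, and by Proposition \ref{prop4.eq1} it is an integral solution, i.e.
\[
u(x)=\int_E G(x,y)\,\tilde\mu(dy)=R\big(f(\cdot,u)\cdot m+\mu\big)(x)\quad\text{q.e. }x\in E.
\]
In particular every solution in the sense of Definition \ref{def2.2.cmsl} satisfies this representation, so it is a renormalized solution to (\ref{eqi.123}) in the sense used in \cite{KR:MM}, and the uniqueness result proved there applies.

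To keep the argument self-contained I would instead finish by a direct comparison. Let $u_1,u_2$ be two solutions; since both carry the same datum $\mu$, their concentrated parts cancel in the difference and
\[
w:=u_1-u_2=R\big(g\cdot m\big),\qquad g:=f(\cdot,u_1)-f(\cdot,u_2),
\]
so $w$ is an integral solution to $-Aw=g\cdot m$ with $g\cdot m$ a bounded \emph{smooth} measure (zero concentrated part). As $f$ is non-increasing in the second variable, $\mathbf{1}_{\{w>0\}}g\le 0$ $m$-a.e. Applying Proposition \ref{prop3.1} to $w$, i.e.\ formula (\ref{eq3.7.1}) with $\mu=g\cdot m$ and vanishing concentrated part, gives
\[
w^+=R\big(\mathbf{1}_{\{w>0\}}\cdot(g\cdot m)\big)-\tfrac12 R\big(\lambda_0(w)\big)\quad\text{q.e.}
\]
Both terms are nonpositive: the first because $G\ge0$ and $\mathbf{1}_{\{w>0\}}g\le0$, the second because $\lambda_0(w)=l_0(w)+j_0(w)$ is a positive measure and $G\ge0$. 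Thus $w^+\le0$, whence $w\le0$ and $u_1\le u_2$; exchanging $u_1$ and $u_2$ yields equality.

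The step I expect to demand the most care is the reduction itself: checking that the $u$-dependent datum $\tilde\mu=f(\cdot,u)\cdot m+\mu$ is admissible (bounded, with the diffuse/concentrated splitting behaving exactly as claimed, which is where $f(\cdot,u)\in L^1(E;m)$ is used) and that the two integrability/Feller hypotheses of the present theorem line up precisely with the two cases of Theorem \ref{prop3.3}(ii). Once the equation is linearised, both the cited uniqueness of \cite{KR:MM} and the direct comparison above follow routinely from the positivity of $G$ and the monotonicity of $f$.
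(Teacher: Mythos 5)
Your proposal is correct, and its skeleton coincides with the paper's: the paper proves Theorem \ref{prop3.3sl} in one line, by combining the equivalence Theorem \ref{prop3.3} with the external uniqueness results \cite[Theorem 4.4]{KR:MM} and \cite[Corollary 4.3]{K:CVPDE}. Your linearization step --- absorbing $f(\cdot,u)$ into the diffuse part, which the paper itself licenses when introducing (\ref{eqi.123}) via the identity $(f(\cdot,u)+\mu)_d=f(\cdot,u)+\mu_d$ --- together with Theorem \ref{prop3.3}(ii) and Proposition \ref{prop4.eq1} reproduces exactly the intended reduction to the integral representation $u=R(f(\cdot,u)\cdot m+\mu)$, and you correctly match the two Feller-type hypotheses with the two cases of Theorem \ref{prop3.3}(ii) (including the extra assumption $u\in L^1(E;m)$ in case (i)). Where you genuinely depart is the finish: instead of quoting \cite[Corollary 4.3]{K:CVPDE} you re-derive the comparison principle internally, and that argument is sound. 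Indeed, $g\cdot m$ with $g=f(\cdot,u_1)-f(\cdot,u_2)\in L^1(E;m)$ is a bounded smooth measure (capacity-null sets are $m$-null by transience, and $\eta\equiv 1$ witnesses the integrability condition); the cancellation $w=u_1-u_2=R(g\cdot m)$ holds q.e.\ because $R|\mu|$ and $R(|f(\cdot,u_i)|\cdot m)$ are finite q.e.\ (Remark \ref{rem.aqc}); and formula (\ref{eq3.7.1}) of Proposition \ref{prop3.1}, applied to $w$ with datum $g\cdot m$ whose concentrated part vanishes, gives $w^+=R(\mathbf{1}_{\{w>0\}}g\cdot m)-\frac12 R(\lambda_0(w))\le 0$ q.e., by the monotonicity of $f$, the positivity of $G$, and the positivity of the Revuz measures $l_0(w)$ and $j_0(w)$. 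What your route buys is self-containedness --- it is in effect a proof of the smooth-data comparison result that the paper imports from \cite{K:CVPDE}, and it yields the slightly stronger ordering statement $u_1\le u_2$ --- while the paper's route buys brevity. Two small points of hygiene you should make explicit: fix quasi-continuous versions of $u_1,u_2$ before subtracting the integral representations, so that $\{w>0\}$ is well defined up to capacity zero and Proposition \ref{prop3.1} is applied to a quasi-continuous integral solution; and record that $w^+=0$ q.e.\ (hence $m$-a.e.) together with the symmetric inequality gives $u_1=u_2$ $m$-a.e., which is the sense in which uniqueness is asserted.
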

\begin{proof}
Follows from Theorem \ref{prop3.3}, \cite[Theorem 4.4]{KR:MM} and \cite[Corollary 4.3]{K:CVPDE}.
\end{proof}

\section{Structure of renormalized solutions}
\label{sec5}


From now on by saying  renormalized solution we mean a renormalized solution in the sense of Definition \ref{def2.2}.
Recall that by the definition of renormalized solution to (\ref{eqi.1}) we have
\[
-AT_k(u)=\mu_d+\nu_k,
\]
where $\nu_k$ is a bounded smooth measure. By Corollary  \ref{eq3.sr1},
\[
\nu_k=-\mathbf{1}_{\{u>k, u\le -k\}}\mu_d+\frac12(\lambda_k(u)-\lambda_{-k}(u)),
\]
and by  the definition of the measure $\lambda_k(u)$  (see
(\ref{eq2.rdgf1})),
\[
\lambda_k(u)=l_k(u)+j_k(u).
\]
In this section, we study the structure of the measures $l_k(u)$
and $j_k(u)$. We show that $j_k(u)$ has an explicit formula via
$u$ and the kernel of the nonlocal part  of the operator $A$. As
for the measure $l_k(u)$, we  show the so-called
reconstruction formula which is well known for equations with
measure data and local (nonlinear) Leray-Lions type operators
(see, e.g., \cite{DMOP}).

We adopt the notation introduced in  Introduction. For  the
Beurling-Deny decomposition of $(\EE,D(\EE))$ we defer the reader
to \cite{FOT}. Note that for  every   $u\in D_e(\EE)$ there exists
a unique smooth Radon measure $\mu^c_{\langle u\rangle}$ such that
\begin{equation}
\label{eq5.rd12}
\langle \mu^c_{\langle u\rangle},\eta\rangle
= 2\EE^{(c)}(u\eta,u)-\EE^{(c)}(u^2,\eta),\quad \eta\in C_c(E)\cap D(\EE)
\end{equation}
(see \cite[(3.2.20)]{FOT}) By \cite[Lemma 5.3.3]{FOT},
\begin{equation}
\label{eq5.rd1}
\mu^c_{\langle u\rangle}=\RR([u(X)]^c),\quad u\in D_e(\EE),
\end{equation}
where $[u(X)]^c$ is the continuous part of the square bracket  of
the semimartingale $u(X)$.

By the probabilistic definition of a solution to (\ref{eqi.1}),
$u(X)$ is a semimartingale. Therefore $[u(X)]^c$ and hence
$\RR([u(X)]^c)$ are well defined also for solutions to
(\ref{eqi.1}), although in general, solutions are not in $D_e(\EE)$.
In this case, we set $\mu^c_{\langle u\rangle}=\RR([u(X)]^c)$. By
the definition of a renormalized solution to (\ref{eqi.1}),
$T_k(u)\in D_e(\EE)$. By \cite[Lemma 5.6.4]{FOT},
\begin{equation}
\label{eq5.9.1}
\mathbf{1}_{\{-k<u\le k\}}\mu^c_{\langle u\rangle}
=\mu^c_{\langle T_k(u)\rangle}.
\end{equation}
Moreover, by \cite[Lemma 3.2.3]{FOT}, $ \mu^c_{\langle T_k(u)
\rangle}$  is bounded. On the other hand, by (\ref{eq5.rd12}),
for every $\eta\in D_e(\EE)\cap \BB_b(E)$ we have
\[
\langle \mu^c_{\langle T_k(u)\rangle},\eta\rangle
=2\EE^{(c)}(T_k(u)\eta,T_k(u))-\EE^{(c)}(T_k(u)^2,\eta).
\]
By this and (\ref{eq5.9.1}),
\[
\langle \mu^c_{\langle u \rangle},\eta\rangle
=\lim_{k\rightarrow \infty}2\EE^{(c)}(T_k(u)\eta,T_k(u))
-\EE^{(c)}(T_k(u)^2,\eta),\quad \eta\in  D_e(\EE)\cap \BB_b(E).
\]
In general, $ \mu^c_{\langle u \rangle}$ is not  a Radon measure.
However, by (\ref{eq5.9.1}) and the fact that $ \mu^c_{\langle
T_k(u) \rangle}$ is bounded, $\langle  \mu^c_{\langle u
\rangle},|h(u)\eta|\rangle$ is finite for all   $\eta\in\BB_b(E)$ and
$h\in\BB_b(E)$ such that $h$ has compact support.

\begin{proposition}
\label{prop5.3.5} Let $\mu\in \MM_b(E)$ and $u$ be a renormalized
solution to \mbox{\rm(\ref{eqi.1})}.  Then for every $k>0$,
\begin{align*}
j_k(u)(dx)&=2\int_E
\Big(|u(y)-k|-|u(x)-k|-\mbox{\rm sign}(u(x)-k)(u(y)-u(x))\Big)J(dx,dy)\\
&\quad
+(\mathbf{1}_{\{u(x)>k\}}(|k|+k)+\mathbf{1}_{\{u(x)\le
k\}}(|k|-k))\kappa(dx).
\end{align*}
\end{proposition}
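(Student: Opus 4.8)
The plan is to read off $j_k(u)$ from the L\'evy system of $\BX$ together with the Beurling-Deny data $(J,\kappa)$. Writing
\[
g_k(a,b):=|b-k|-|a-k|-\mbox{\rm sign}(a-k)(b-a)\ge0,
\]
the very definition of $J^k$ gives $J^k_t(u(X))=\sum_{s\le t}g_k(u(X_{s-}),u(X_s))$. A direct case check shows $g_k(a,b)=0$ whenever $a$ and $b$ lie on the same side of $k$, so that only the jumps of $u(X)$ \emph{across} the level $k$ contribute; moreover, by the convention $u(\partial)=0$, the killing of $\BX$ enters as a jump from $u(X_{s-})$ to $0$.

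Since $\BX$ is a Hunt process associated with a regular Dirichlet form, it admits a L\'evy system $(N,H)$; setting $\mu_H=\RR(H)$ one has, by \cite[Theorem 5.3.1]{FOT}, the identities $N(x,dy)\,\mu_H(dx)=2J(dx,dy)$ on $E\times E\setminus d$ and $N(x,\{\partial\})\,\mu_H(dx)=\kappa(dx)$. First I would apply the L\'evy system formula to the nonnegative kernel $(x,y)\mapsto g_k(u(x),u(y))$ (with $u(\partial)=0$) to compute the dual predictable projection
\[
{}^pJ^k_t(u(X))=\int_0^t\Big[\int_E g_k(u(X_s),u(y))\,N(X_s,dy)+g_k(u(X_s),0)\,N(X_s,\{\partial\})\Big]\,dH_s.
\]
As recalled before (\ref{eq2.rdgf1}), ${}^pJ^k(u(X))$ is a positive continuous additive functional, so its Revuz measure is the $dH_s$-density times $\mu_H$; using the two L\'evy-system identities this reads
\[
j_k(u)(dx)=2\int_E g_k(u(x),u(y))\,J(dx,dy)+g_k(u(x),0)\,\kappa(dx).
\]
Expanding $g_k(u(x),u(y))$ reproduces verbatim the first (jump) term of the asserted formula.

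It remains to identify the killing density $g_k(u(x),0)$. Using $\mbox{\rm sign}(t)=1$ for $t>0$ and $\mbox{\rm sign}(t)=-1$ for $t\le0$, one computes $g_k(a,0)=|k|+k$ when $a>k$ and $g_k(a,0)=|k|-k$ when $a\le k$, that is $g_k(u(x),0)=\mathbf 1_{\{u(x)>k\}}(|k|+k)+\mathbf 1_{\{u(x)\le k\}}(|k|-k)$, which is precisely the second term.

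I expect the main obstacle to be the rigorous application of the L\'evy system formula, because a renormalized solution $u$ need not lie in $D_e(\EE)$ (only the truncations $T_{k'}(u)$ do), so $u(X)$ may be unbounded and the kernel $g_k(u(\cdot),u(\cdot))$ is not a priori integrable. To handle this I would first prove the identity for each $T_{k'}(u)\in D_e(\EE)\cap\BB_b(E)$, where the jump computation is standard, and then let $k'\to\infty$: a short check gives $g_k(T_{k'}(a),T_{k'}(b))\uparrow g_k(a,b)$ for $k'>k$, whence $J^k(T_{k'}(u)(X))\uparrow J^k(u(X))$, their dual predictable projections increase, and the corresponding Revuz measures $j_k(T_{k'}(u))$ increase to $j_k(u)$; monotone convergence then passes to the limit in both integrals. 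Finiteness of the limit is guaranteed by Proposition \ref{prop3.1}, since $0\le j_k(u)\le\lambda_k(u)\in\MM_{0,b}(E)$. The other delicate point, worth isolating, is the correct bookkeeping of the jump to the cemetery, which is exactly what yields the $\kappa$-term and must be kept separate from the ordinary jumps governed by $J$.
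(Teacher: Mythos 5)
Your proposal is correct and follows essentially the same route as the paper: apply the L\'evy system formula \cite[(A.3.23)]{FOT} to ${}^pJ^k(u(X))$, pass to the Revuz measure, and use $N(x,dy)\,\beta(dx)=2J(dx,dy)$ together with $N(x,\{\partial\})\,\beta(dx)=\kappa(dx)$ and the convention $u(\partial)=0$, with the same case check for the killing density. Your extra truncation step via $T_{k'}(u)$ is valid but not needed, since \cite[(A.3.23)]{FOT} applies to any nonnegative kernel vanishing on the diagonal, and finiteness of ${}^pJ^k(u(X))$ already follows from $u(X)$ being a special semimartingale together with $j_k(u)\le\lambda_k(u)\in\MM_{0,b}(E)$ (Proposition \ref{prop3.1}).
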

\begin{proof}
Let $k>0$ and  $(N,H)$ be a L\'evy system of $\mathbb X$.
By \cite[(A.3.23)]{FOT},
\begin{align*}
{}^p[J^k(u(X))]_t&=
\int_0^t\int_{{E}\cup\{\partial\}}\Big(|u(y)-k|-|u(X_s)-k|\\
&\qquad\qquad\qquad-\mbox{\rm sign}(u(X_s)-k)(u(y)-u(X_s))\Big)
N(X_s,dy)\,dH_s.
\end{align*}
Put $\beta=\RR(H)$. From the above equation it follows that
\begin{align*}
&j_k(u)(dx)\\
&\quad= \int_{{E}\cup\{\partial\}}\Big(|u(y)-k|-|u(x)-k|-\mbox{\rm
sign} (u(x)-k)(u(y)-u(x))\Big) N(x,dy)\,\beta(dx).
\end{align*}
Hence
\begin{align*}
j_k(u)(dx)&=
\int_{E}\Big(|u(y)-k|-|u(x)-k|-\mbox{sign}(u(x)-k)(u(y)-u(x))\Big)
N(x,dy)\,\beta(dx)\\&
+\Big(|u(\partial)-k|-|u(x)-k|-\mbox{sign}(u(x)-k)(u(\partial)-u(x))\Big)
N(x,\{\partial\})\,\beta(dx).
\end{align*}
This implies the desired equality because by the convention
$u(\partial)=0$, and by \cite[Theorem 5.3.1]{FOT},
$N(x,\{\partial\})\,\beta(dx)=\kappa(dx)$ and
$N(x,dy)\beta(dx)=2J(dx,dy)$.
\end{proof}

\begin{proposition}
\label{prop3.5} Let $\mu\in \MM_b(E)$ and $u$ be a renormalized solution to
\mbox{\rm(\ref{eqi.1})}. Then for every bounded $\eta\in Exc$ and
every $\varphi\in \BB^+(E)$,
\begin{equation}
\label{eq.dtbtc}
\int_\BR\langle l_a(u),\eta\rangle\varphi(a)\,da
=\langle \mu^c_{\langle u\rangle},\varphi(u)\eta\rangle.
\end{equation}
\end{proposition}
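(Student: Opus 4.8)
The plan is to deduce (\ref{eq.dtbtc}) from the occupation times formula for the semimartingale $u(X)$ combined with the Revuz duality. Since $u$ is a renormalized solution, by the probabilistic representation (Propositions \ref{prop4.eq1} and \ref{prop3.eq1}) $u(X)$ is a special semimartingale, so its continuous quadratic variation $[u(X)]^c$ and its local times $L^a(u(X))$, $a\in\BR$, are positive continuous additive functionals of $\mathbb X$; recall $l_a(u)=\RR(L^a(u(X)))$ by (\ref{eq2.rdgf1}) and $\mu^c_{\langle u\rangle}=\RR([u(X)]^c)$ by (\ref{eq5.rd1}). First I would record the occupation times formula (see \cite{Protter}): for every $t>0$, a.s.,
\[
\int_0^t\varphi(u(X_s))\,d[u(X)]^c_s=\int_\BR\varphi(a)L^a_t(u(X))\,da.
\]
Denote the common value by $C$, an element of $\mathbf{A}^+_c$. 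The whole proof then consists in computing the Revuz measure $\RR(C)$ tested against $\eta$ in two ways, reading $C$ from each side of the formula above.

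Reading the left-hand side, $C=\int_0^\cdot\varphi(u(X_s))\,d[u(X)]^c_s$, so by the multiplication property of the Revuz correspondence (immediate from the duality formula (\ref{eq2.1}), since $\langle\RR(f\cdot A),\eta\rangle=\lim_{t\to0^+}\frac1tE_m\int_0^t\eta(X_r)f(X_r)\,dA_r=\langle\RR(A),f\eta\rangle$ for $\eta\in Exc$) its Revuz measure is $\varphi(u)\cdot\mu^c_{\langle u\rangle}$. Hence, for a bounded excessive $\eta$,
\[
\langle\RR(C),\eta\rangle=\langle\varphi(u)\cdot\mu^c_{\langle u\rangle},\eta\rangle=\langle\mu^c_{\langle u\rangle},\varphi(u)\eta\rangle,
\]
which is exactly the right-hand side of (\ref{eq.dtbtc}).

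Reading instead the right-hand side, $C_t=\int_\BR\varphi(a)L^a_t(u(X))\,da$. For nonnegative progressive integrands Tonelli's theorem yields $\int_0^t\eta(X_r)\,dC_r=\int_\BR\varphi(a)\int_0^t\eta(X_r)\,dL^a_r(u(X))\,da$ (one checks this first for indicator integrands in $r$, where it reduces to the defining identity for $C$, and then extends by a monotone class argument). Therefore, for each $t>0$,
\[
\frac1tE_m\int_0^t\eta(X_r)\,dC_r=\frac1t\int_\BR\varphi(a)\,E_m\!\int_0^t\eta(X_r)\,dL^a_r(u(X))\,da.
\]
It remains to let $t\to0^+$ under the integral $\int_\BR\,da$. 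Here I would use the crucial fact, valid because $\eta\in Exc$ and $L^a(u(X))\in\mathbf{A}^+_c$, that $t\mapsto\frac1tE_m\int_0^t\eta(X_r)\,dL^a_r(u(X))$ is nonincreasing for each fixed $a$; thus as $t\downarrow0$ the integrand increases to $\langle l_a(u),\eta\rangle$, and monotone convergence gives
\[
\langle\RR(C),\eta\rangle=\lim_{t\to0^+}\frac1t\int_\BR\varphi(a)\,E_m\!\int_0^t\eta(X_r)\,dL^a_r(u(X))\,da=\int_\BR\varphi(a)\langle l_a(u),\eta\rangle\,da.
\]
Equating the two expressions for $\langle\RR(C),\eta\rangle$ proves (\ref{eq.dtbtc}).

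I expect the delicate point to be this last interchange of $\lim_{t\to0^+}$ with $\int_\BR\,da$: it is precisely the monotonicity in $t$ supplied by the excessiveness of $\eta$ (the very reason the hypothesis is restricted to $\eta\in Exc$) that makes monotone convergence applicable, and one must also be careful that the Stieltjes passage $dC_r=\int_\BR\varphi(a)\,dL^a_r\,da$ read off from the occupation formula is legitimate. For unbounded $\varphi\in\BB^+$ one first establishes the identity for $\varphi\wedge n$ and then lets $n\to\infty$ by monotone convergence on both sides.
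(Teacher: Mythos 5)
Your proof is correct and takes essentially the same route as the paper: both arguments rest on the occupation time formula converting $\int_\BR\varphi(a)L^a(u(X))\,da$ into $\int_0^\cdot\varphi(u(X_s))\,d[u(X)]^c_s$, on Revuz duality via (\ref{eq2.1}) and (\ref{eq5.rd1}), and on the monotonicity in $t$ of $t\mapsto\frac1t E_{\eta\cdot m}A_t$ for $\eta\in Exc$ (the paper's citation of \cite[Theorem 5.1.3]{FOT}), which justifies the monotone-convergence interchange of $\lim_{t\to0^+}$ with $\int_\BR\,da$ that you correctly identify as the delicate point. The only difference is organizational: the paper first rewrites $\langle l_a(u),\eta\rangle$ using $E_{\eta\cdot m}$ via (\ref{eq2.1}), so the occupation formula is applied pathwise under $E_{\eta\cdot m}$ and your explicit Tonelli step $dC_r=\int_\BR\varphi(a)\,dL^a_r\,da$ is absorbed into that, while your ``compute the Revuz measure of $C$ two ways'' framing makes the same ingredients slightly more explicit.
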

\begin{proof}
By (\ref{eq2.1}) and the monotone convergence,
\begin{align*}
\int_\BR\langle l_a(u),\eta\rangle\varphi(a)\,da
&=\int_\BR\Big[\lim_{t\searrow 0}\frac1tE_{\eta\cdot m}\int_0^t\,dL^a_r(u(X)) \Big]\varphi(a)\,da\\
&=\lim_{t\searrow 0}\frac1t \Big[\int_\BR E_{\eta\cdot m}\int_0^t\,dL^a_r(u(X)) \Big]\varphi(a)\,da.
\end{align*}
Applying now the occupation time formula  (see
\cite[Corollary 1, page 216]{Protter}) we get
\begin{align*}
\int_\BR\langle l_a(u),\eta\rangle\varphi(a)\,da
=\lim_{t\searrow 0}\frac1t E_{\eta\cdot m}\int_0^t\varphi(u(X_r))d[u(X)]^c_r,
\end{align*}
which by (\ref{eq2.1}) and (\ref{eq5.rd1}) is equal to $\langle \mu^c_{\langle u\rangle},\varphi(u)\eta\rangle$.
\end{proof}

\begin{corollary}
\label{cor5.65} Assume that $(\EE,D(\EE))$ is local (i.e. $J\equiv 0$ in the Beurling-Deny decomposition of $\EE$).
Let $u$ be a renormalized solution to (\ref{eqi.1}). Let $\{b_n\},
\{c_n\}$ be nondecreasing sequences  such that $b_n<c_n$, $n\ge1$
and $b_n\nearrow \infty$, $c_n\nearrow \infty$ as
$n\rightarrow\infty$. Then for every $\eta\in C_b(E)$,
\[
\lim_{n\rightarrow\infty}\frac{1}{c_n-b_n}\langle \mu^c_{\langle u\rangle},
\mathbf{1}_{\{b_n\le u\le c_n\}}\eta\rangle= 2\langle
\mu_c^+,\eta\rangle
\]
and
\[
\lim_{n\rightarrow\infty}\frac{1}{c_n-b_n}\langle \mu^c_{\langle u\rangle},
\mathbf{1}_{\{-c_n\le u\le -b_n\}}\eta\rangle= 2\langle
\mu_c^-,\eta\rangle.
\]
\end{corollary}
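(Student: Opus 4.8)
The plan is to reduce the statement, via the integral identity of Proposition \ref{prop3.5}, to the asymptotics of the densities $a\mapsto\langle l_a(u),\eta\rangle$, and then to control the killing contribution hidden in the decomposition $\lambda_a(u)=l_a(u)+j_a(u)$. First I would apply Proposition \ref{prop3.5} with $\varphi=\mathbf{1}_{[b_n,c_n]}$ and a bounded $\eta\in\mbox{Exc}$. Since $\varphi(u)=\mathbf{1}_{\{b_n\le u\le c_n\}}$, this gives $\langle\mu^c_{\langle u\rangle},\mathbf{1}_{\{b_n\le u\le c_n\}}\eta\rangle=\int_{b_n}^{c_n}\langle l_a(u),\eta\rangle\,da$, so that $\frac{1}{c_n-b_n}\langle\mu^c_{\langle u\rangle},\mathbf{1}_{\{b_n\le u\le c_n\}}\eta\rangle$ is exactly the Cesàro average of $a\mapsto\langle l_a(u),\eta\rangle$ over $[b_n,c_n]$. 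Because $b_n<c_n$ and $b_n,c_n\to\infty$, the elementary fact that averaging a function having a limit $L$ at infinity over intervals escaping to infinity again yields $L$ reduces the whole statement to the single claim $\lim_{a\to\infty}\langle l_a(u),\eta\rangle=2\langle\mu_c^+,\eta\rangle$ (and symmetrically with $l_{-a}$ and $\mu_c^-$).

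To identify this limit I would write $l_a(u)=\lambda_a(u)-j_a(u)$. By Proposition \ref{prop3.2} we have $\langle\lambda_a(u),\eta\rangle\to 2\langle\mu_c^+,\eta\rangle$ for bounded $\eta\in\mbox{Exc}$, so it remains to show that the killing term disappears, i.e. $\langle j_a(u),\eta\rangle\to0$. Here the hypothesis $J\equiv0$ is essential: Proposition \ref{prop5.3.5} then collapses to $j_a(u)=2a\,\mathbf{1}_{\{u>a\}}\cdot\kappa$ for $a>0$, whence $\langle j_a(u),\eta\rangle=2a\int_{\{u>a\}}\eta\,d\kappa\le 2\|\eta\|_\infty\,a\,\kappa(\{u>a\})$ for $\eta\ge0$.

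This last estimate is the main obstacle, because of the factor $a$: it is not enough to know that $\kappa(\{u>a\})\to0$ (which is immediate, since $u$ is finite q.e. and $\kappa$ is smooth), one genuinely needs $a\,\kappa(\{u>a\})\to0$. I would obtain this from the integrability $\int_E|u|\,d\kappa<\infty$. Indeed, $u$ is an integral solution (Proposition \ref{prop4.eq1}), so $|u|=|R\mu|\le R|\mu|$, and by symmetry of $G$ together with $R\kappa\le1$ (the potential of the killing measure is bounded by one, since by the Lévy system killing amounts to at most one jump to $\partial$), $\int_E|u|\,d\kappa\le\int_E R|\mu|\,d\kappa=\int_E R\kappa\,d|\mu|\le|\mu|(E)<\infty$. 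Then $a\,\kappa(\{u>a\})\le\int_{\{u>a\}}u\,d\kappa\to0$ as the tail of a convergent integral, which gives $\langle j_a(u),\eta\rangle\to0$ and hence $\langle l_a(u),\eta\rangle\to2\langle\mu_c^+,\eta\rangle$ for every bounded $\eta\in\mbox{Exc}$.

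Finally I would pass from bounded excessive test functions to arbitrary $\eta\in C_b(E)$. Using Lemma \ref{lm2.1}, sandwich $\eta$ between $h^2_m\le\eta\le h^1_m$ with $h^i_m\in\mbox{Exc}-\mbox{Exc}$ bounded and $h^2_m\nearrow\eta$, $h^1_m\searrow\eta$. The reconstruction limit already holds for each $h^i_m$, since both sides of the identity of Proposition \ref{prop3.5} are linear in the test function and the quantities $\langle\mu^c_{\langle u\rangle},\mathbf{1}_{\{b_n\le u\le c_n\}}h^i_m\rangle$ are finite (the truncating factor $\mathbf{1}_{[b_n,c_n]}$ has compact support). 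Because $\mu^c_{\langle u\rangle}\ge0$ and $\mathbf{1}_{\{b_n\le u\le c_n\}}\ge0$, the averaged quantities are monotone in the test function; letting $n\to\infty$ and then $m\to\infty$ (dominated convergence against the bounded measure $\mu_c^+$) squeezes the limit to $2\langle\mu_c^+,\eta\rangle$. The second assertion is proved identically, replacing $l_a,j_a,u^+,\mu_c^+$ by $l_{-a},j_{-a},u^-,\mu_c^-$ and using $j_{-a}(u)=2a\,\mathbf{1}_{\{u\le -a\}}\cdot\kappa$ together with $a\,\kappa(\{u\le -a\})\le\int_{\{u\le -a\}}|u|\,d\kappa\to0$.
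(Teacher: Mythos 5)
Your proposal is correct, and at its core it follows the same route as the paper: plug $\varphi=\mathbf{1}_{[b_n,c_n]}$ (resp.\ $\mathbf{1}_{[-c_n,-b_n]}$) into the occupation-density identity of Proposition \ref{prop3.5} and identify the limiting measure through Proposition \ref{prop3.2} and Theorem \ref{th2.2.2}. The paper's own proof, however, is two lines long and disposes of the difference between $\lambda_a(u)$ and $l_a(u)$ with the bare assertion that $J\equiv 0$ implies $\lambda_a(u)=l_a(u)$; as you correctly observe, locality in the stated sense only kills the jump kernel, and by Proposition \ref{prop5.3.5} one still has $j_a(u)=2a\,\mathbf{1}_{\{u>a\}}\cdot\kappa$ for $a>0$, which does not vanish unless $\kappa=0$. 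Your tail estimate --- $\int_E|u|\,d\kappa\le\int_E R\kappa\,d|\mu|\le\|\mu\|_{TV}$ by symmetry of $G$ and $R\kappa\le 1$ (the killing functional compensates a single jump to $\partial$), whence $a\,\kappa(\{u>a\})\le\int_{\{u>a\}}u\,d\kappa\to 0$ --- is precisely what is needed to neutralize this term, and it is a genuine addition: it justifies a step that the paper's terse proof leaves implicit (the paper's identity $\lambda_a(u)=l_a(u)$ is literally correct only when the killing measure vanishes as well). The second place where you are more careful is the class of test functions: Proposition \ref{prop3.5} is proved only for bounded excessive $\eta$, whereas the corollary concerns $\eta\in C_b(E)$; your sandwich via Lemma \ref{lm2.1}, using positivity of the measures $\mathbf{1}_{\{b_n\le u\le c_n\}}\cdot\mu^c_{\langle u\rangle}$ and the validity of the identity, by linearity, for bounded elements of $\mathrm{Exc}-\mathrm{Exc}$, replicates the device the paper itself uses in the proof of Theorem \ref{th2.2.2} and fills in what the paper's citation of that theorem glosses over. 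Your remaining reduction --- first proving the pointwise limit $\langle l_a(u),\eta\rangle\to 2\langle\mu_c^+,\eta\rangle$ and then Ces\`aro-averaging over $[b_n,c_n]$ with $b_n\to\infty$ --- is an immaterial reordering of the paper's direct averaging. In short: correct, same skeleton, but with two substantive completions (the $\kappa$-term and the test-function extension) that strengthen the published two-line argument.
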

\begin{proof}
Since $J\equiv 0$, $\lambda_a(u)=l_a(u)$. Taking now
$\varphi=\frac{1}{c_n-b_n}\mathbf{1}_{[b_n,c_n]}$ or
$\varphi=\frac{1}{c_n-b_n}\mathbf{1}_{[-c_n,-b_n]}$ in
(\ref{eq.dtbtc}), letting $n\rightarrow \infty$ and using Theorem
\ref{th2.2.2} yields the desired convergences.
\end{proof}

\begin{example}
Let $D$ be a bounded domain in $\BR^d$ and $m$ be the Lebesgue
measure on $\BR^d$. Consider the operator
\[
Au=\sum_{i,j=1}^d(a_{ij}u_{x_i})_{x_j},
\]
where $a_{ij}\in L^1_{loc}(D;m)$ and $a=[a_{ij}]_{i,j=1,\dots ,d}$
is a non-negative definite symmetric matrix. To give a precise
definition of the operator $A$, we assume that the form
\[
\EE^0(u,v):= \int_D a\nabla u\nabla v\,dm,\quad u,v\in C^\infty_c(D)
\]
is closable. This is satisfied for instance if $a_{ij}\in H^1_{loc}(D)$ for $i,j=1,\dots, d$ or $a\ge \lambda I$ for some $\lambda>0$ (see, e.g.,  \cite[page
111]{FOT}). Let $(\EE, D(\EE))$ denote the closure of $(\EE^0,C^\infty_c(D))$.  Then there exists a unique self-adjoint operator $(A,D(A))$
such that $D(A)\subset D(\EE)$, and
\[
\EE(u,v)=(-Au,v),\quad u\in D(A), v\in L^2(D;m).
\]
It is clear that $\EE^{(c)}=\EE$, so $(\EE,D(\EE))$ is local. Moreover,
\[
\langle \mu^c_{\langle u\rangle},\eta\rangle =2\int_D\eta|\sigma \nabla u|^2\,dm,
\]
where $\sigma$ is such that $\sigma\cdot\sigma^{T}=a$. By Corollary \ref{cor5.65}, for any $\{b_n\}$, $\{c_n\}$ satisfying its assumptions we have
\[
\lim_{n\rightarrow\infty}\frac{1}{c_n-b_n}\int_{\{b_n\le u\le c_n\}}\eta|\sigma\nabla u|^2\,dm =\langle \mu_c^+,\eta\rangle
\]
and
\[
\lim_{n\rightarrow\infty}\frac{1}{c_n-b_n}\int _{\{-c_n\le u\le -b_n\}}\eta|\sigma\nabla u|^2\,dm =\langle \mu_c^-,\eta\rangle.
\]
\end{example}

\begin{example}
Let $m$ be the Lebesgue measure on $\BR^d$ and $\alpha\in (0,1\wedge(d/2))$. Consider the fractional Laplace operator $A=\Delta^\alpha$  associated with the Dirichlet form on $L^2(\BR^d;m)$ defined as
\[
\begin{cases}
\EE(u,v)=c(\alpha,d)\int_{\BR^d}\int_{\BR^d}
\frac{(u(x)-u(y))(v(x)-v(y))}{|x-y|^{d+2\alpha}}\,dx\,dy, \quad u,v\in D(\EE),
\smallskip\\
D(\EE)=\{u\in L^2(\BR^d;m):\EE(u,u)<\infty,
\end{cases}
\]
where $c(d,\alpha)>0$ is some suitably chosen constant. In this example
$\EE^{c}=0$, $\kappa=0$ and
\[
J(dx,dy)=\frac{c(\alpha,d)}{|x-y|^{d+2\alpha}}\,dx\,dy,
\]
so by Proposition \ref{prop5.3.5},
\[
j_k(u)(dx)=c(d,\alpha)\int_{\BR^d} \frac{|u(y)-k|-|u(x)-k|-\mbox{sign}(u(x)-k)(u(y)-u(x))}{|x-y|^{d+2\alpha}}\,dy\,dx.
\]
\end{example}

\section{Renormalized solutions for smooth measure data}
\label{sec}

In \cite{AAB} a definition of renormalized solutions to (\ref{eqi.1}) with purely jumping
operator on $\BR^d$ and $\mu\in L^1(\BR^d)$ was introduced. In this section, we  show that this definition can  be extended to general  smooth measure data
and the class of operators
considered in the present paper, so in particular to the class of operators considered in \cite{KPU}.
We also show that if $\mu\in\MM_{0,b}$, then renormalized solutions considered in the previous sections are renormalized solutions in the sense of the new definition formulated below.

Given  $h\in C^1_c(\BR)$, $\eta\in D_e(\EE)\cap \BB_b(E)$ and $u\in D(\EE)$  such that  $T_k(u)\in D_e(\EE)$ for $k>0$  and   (\ref{eq.i1.3}) is satisfied, we set
\begin{align}
\label{eq6.2.1}
\nonumber \EE(u,h(u)\eta)&=\EE^{(c)}(T_M(u),h(u)\eta)\\&\quad
\nonumber+\int_{E\times E}(u(x)-u(y))(h(u)(x)-h(u)(y))\frac{\eta(x)+\eta(y)}{2}\,J(dx,dy)\\&
\nonumber\quad+\int_{E\times E}(u(x)-u(y))(\eta(x)-\eta(y))\frac{h(u)(x)+h(u)(y)}{2}\,J(dx,dy)\\&
\quad+\int_E u(x) h(u)(x)\eta(x)\,\kappa(dx),
\end{align}
where $M>0$ is chosen so that  supp$[h]\subset [-M,M]$.  Thanks to the assumptions on $h,\eta$ and $u$
all the integrals appearing in (\ref{eq6.2.1}) are absolutely convergent. Furthermore, by \cite[Theorem 3.2.2]{FOT},
\begin{equation}
\label{eq6.2.2}
\EE^{(c)}(T_M(u)-T_{M'}(u), h(u)\eta)=0
\end{equation}
for every $M'>0$ such that  supp$[h]\subset [-M',M']$, so $\EE(u,h(u)\eta)$ is well defined. Set
\[
\Phi_k(u)=T_{k+1}(u)-T_k(u),\quad k> 0.
\]

\begin{definition}
\label{def6.2.1}
Let $\mu\in\MM_{0,b}(E)$. We say that $u\in\BB(E)$ is a renormalized solution to (\ref{eqi.1}) if
\begin{enumerate}
\item[(i)] $T_k(u)\in D_e(\EE)$ for every  $k>0$ and (\ref{eq.i1.3}) is satisfied,
\item[(ii)]$\EE(u,h(u)\eta)=\langle \mu, h(u)\eta\rangle$  for every $\eta\in\BB_b(E)\cap D_e(\EE)$,

\item[(iii)] $\EE(u,\Phi_k(u))\rightarrow 0$ as $k\rightarrow \infty$.
\end{enumerate}
\end{definition}

\begin{proposition}
Let $\mu\in\MM_{0,b}(E)$. If $u$ is renormalized  solution to
\mbox{\rm(\ref{eqi.1})} in the sense of Definition \ref{def2.2}  then $u$ is
renormalized solution to \mbox{\rm(\ref{eqi.1})} in the sense of Definition
\ref{def6.2.1}.
\end{proposition}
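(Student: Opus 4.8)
The plan is to verify the three conditions of Definition~\ref{def6.2.1} by feeding well-chosen test functions into the variational identity of Definition~\ref{def2.2}(ii),
\[
\EE(T_m(u),\xi)=\langle\mu_d,\xi\rangle+\langle\nu_m,\xi\rangle,\qquad \xi\in D_e(\EE)\cap\BB_b(E),
\]
and letting the truncation level $m\to\infty$. Two facts about smoothness drive everything. First, $\mu\in\MM_{0,b}(E)$ forces $\mu_c=0$ and $\mu_d=\mu$, so the right-hand side reads $\langle\mu,\xi\rangle+\langle\nu_m,\xi\rangle$. Second, by Remark~\ref{rem4.tvc} we have $\|\nu_m\|_{TV}\to0$, so for any uniformly bounded $\xi$ the correction $\langle\nu_m,\xi\rangle\to0$ and the identity collapses to $\lim_{m}\EE(T_m(u),\xi)=\langle\mu,\xi\rangle$. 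I will always take $\xi$ of the form $h(u)\eta$ with $h$ bounded Lipschitz and $\eta\in D_e(\EE)\cap\BB_b(E)$; such $\xi$ again lies in $D_e(\EE)\cap\BB_b(E)$ because this set is an algebra stable under composition with Lipschitz maps.

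For condition~(i) the new ingredient beyond $T_k(u)\in D_e(\EE)$ is the integrability (\ref{eq.i1.3}). I would obtain it by taking $\xi=T_k(u)$ and splitting $\EE(T_m(u),T_k(u))$ into its Beurling--Deny parts. For $m\ge k$ the local part equals $\EE^{(c)}(T_k(u),T_k(u))$ by strong locality, while both the jump integrand $(T_m(u)(x)-T_m(u)(y))(T_k(u)(x)-T_k(u)(y))$ and the killing integrand $T_m(u)T_k(u)$ are nonnegative and nondecreasing in $m$. Since the left-hand side tends to the finite number $\langle\mu,T_k(u)\rangle$, monotone convergence forces the jump and killing limits to be finite; the jump limit is exactly the integral in (\ref{eq.i1.3}). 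Running the same estimate at level $M+1$ and comparing with the finite jump energy of $T_{M+1}(u)$ also yields the tail bound $\int_{\{|u(x)|\le M,\,|u(y)|>M+1\}}|u(y)|\,J(dx,dy)<\infty$, which I will use next.

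Condition~(ii): fix $h\in C^1_c(\BR)$ with $\mathrm{supp}[h]\subset(-M,M)$, $\eta\in D_e(\EE)\cap\BB_b(E)$, and put $\xi=h(u)\eta$. In $\EE(T_m(u),h(u)\eta)$ the local part stabilizes for $m\ge M$ and the killing part equals $\int_E u\,h(u)\eta\,\kappa$ exactly, since $h(u)\ne0$ forces $T_m(u)=u$. The jump part differs from its $u$-version by $-2\int_{\{|u(x)|>m,\,|u(y)|\le M\}}(T_m(u)-u)(x)\,h(u)\eta(y)\,J(dx,dy)$, which is bounded by $2\|h\eta\|_\infty\int_{\{|u(x)|>M+1,\,|u(y)|\le M\}}|u(x)|\,J$ and tends to $0$ by dominated convergence via the tail bound. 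Rewriting the limiting single jump integral through the identity $h(u)\eta(x)-h(u)\eta(y)=(h(u)(x)-h(u)(y))\frac{\eta(x)+\eta(y)}{2}+(\eta(x)-\eta(y))\frac{h(u)(x)+h(u)(y)}{2}$ matches it with (\ref{eq6.2.1}); hence $\EE(T_m(u),h(u)\eta)\to\EE(u,h(u)\eta)$, and combined with $\langle\nu_m,h(u)\eta\rangle\to0$ this gives $\EE(u,h(u)\eta)=\langle\mu,h(u)\eta\rangle$. For condition~(iii) I take $\xi=\Phi_k(u)=T_{k+1}(u)-T_k(u)$; exactly as in (i) the local part stabilizes while the jump and killing parts are nonnegative and nondecreasing in $m$, so the bounded left-hand side forces their limits finite, and these limits are by definition the local, jump and killing parts of $\EE(u,\Phi_k(u))$. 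Thus $\EE(u,\Phi_k(u))=\langle\mu,\Phi_k(u)\rangle$, and since $\Phi_k(u)\to0$ q.e. as $k\to\infty$ with $|\Phi_k(u)|\le1$ and $\mu$ bounded, dominated convergence gives $\langle\mu,\Phi_k(u)\rangle\to0$.

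The routine parts are the algebra and membership claims for the test functions and the identification of the Beurling--Deny limit with the extension (\ref{eq6.2.1}). The main obstacle is the interchange of the limit $m\to\infty$ with the jump integral: one must control, uniformly in $m$, the mass that $T_m(u)$ loses on the tail $\{|u|>m\}$ when it is coupled through $J$ to the region where the test function is supported. This is precisely what the tail estimate secures, and it in turn rests on (\ref{eq.i1.3}) together with the finiteness of the jump energy of $T_{M+1}(u)$; without it neither the dominated convergence in~(ii) nor the monotone forcing of finiteness in~(i) and~(iii) would be justified.
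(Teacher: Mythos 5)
Your proof is correct and, for conditions (i) and (ii) of Definition \ref{def6.2.1}, follows essentially the paper's route: plug $T_k(u)$, resp.\ $h(u)\eta$, into Definition \ref{def2.2}(ii), split the form by Beurling--Deny, let the truncation level $m\to\infty$, and kill the correction term via $\|\nu_m\|_{TV}\to0$ from Remark \ref{rem4.tvc} (the paper gets (i) by Fatou plus Corollary \ref{col3.1}, which gives the same bound since here $\mu_c=0$). You deviate genuinely in two places. First, in (ii) the paper passes to the limit directly inside the two symmetric integrals of (\ref{eq6.2.1}), dominating by $|u(x)-u(y)|\,|h(u)(x)-h(u)(y)|$ and its companion, and leaning on the standing claim (stated after (\ref{eq6.2.1}), and going back to (\ref{eq.i1.3}) plus the regularity of $h$) that those integrals converge absolutely; you instead isolate the explicit tail term $-2\int_{\{|u(x)|>m,\,|u(y)|\le M\}}(T_m(u)-u)(x)\,h(u)\eta(y)\,J(dx,dy)$ and control it by the tail estimate $\int_{\{|u(x)|\le M,\,|u(y)|>M+1\}}|u(y)|\,J(dx,dy)<\infty$, which you correctly extract from (\ref{eq.i1.3}) at level $M+1$ (on that set the integrand there dominates $|u(y)|-M\ge |u(y)|/(M+1)$); this is more self-contained, as it in effect proves the absolute-convergence claim rather than assuming it. Second, and more substantially, for (iii) the paper simply cites \cite[Proposition 5.10]{KR:JFA}, whereas you give a direct argument: test with $\Phi_k(u)$, use monotonicity in $m$ of the (nonnegative) jump and killing integrands together with stabilization of the local part to obtain $\EE(u,\Phi_k(u))=\langle\mu,\Phi_k(u)\rangle$, and then let $k\to\infty$ by dominated convergence, since $u$ is finite q.e.\ and the smooth bounded measure $\mu$ does not charge the polar set where $|u|=\infty$; this replaces the external citation by a short self-contained proof, which is a genuine gain. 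Two small points to tighten: your claim that $D_e(\EE)\cap\BB_b(E)$ is ``stable under composition with Lipschitz maps'' needs $h(0)=0$ (and $u\notin D_e(\EE)$ anyway) --- the correct justification is $h(u)=h(T_M(u))$, so $h(u)\eta=(h-h(0))(T_M(u))\,\eta+h(0)\eta$ lies in $D_e(\EE)\cap\BB_b(E)$ because this set is an algebra (the paper uses $h(u)\eta$ as a test function with the same implicit justification); and in (iii) you should say what $\EE(u,\Phi_k(u))$ means, since $\Phi_k$ is not of the form $h(u)\eta$ with $h\in C^1_c(\BR)$ so (\ref{eq6.2.1}) does not literally apply --- your monotone limits produce exactly the natural Beurling--Deny integrals, all nonnegative because $\Phi_k$ is nondecreasing, which is the intended reading, so no harm results.
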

\begin{proof}
By  Definition \ref{def2.2}(ii), for all $k,l>0$,
\[
\EE(T_k(u), T_l(u))=\langle \mu_d,T_l(u)\rangle
+\langle \lambda_k, T_l(u)\rangle\le l\|\mu_d\|_{TV}+l\|\nu_k\|_{TV}.
\]
Hence
\[
\int_{E\times E}(T_k(u)(x)-T_k(u)(y))(T_l(u)(x)-T_l(u)(y))J(dx,dy)
\le l\|\mu_d\|_{TV}+l\|\nu_k\|_{TV}.
\]
Letting $k\rightarrow\infty$ and applying Fatou's lemma and Corollary \ref{col3.1} we get
\[
\int_{E\times E}(u(x)-u(y))(T_l(u)(x)-T_l(u)(y))J(dx,dy)\le l\|\mu\|_{TV},\quad l>0.
\]
From this and condition (i) of Definition \ref{def2.2} it follows that $u$ satisfies condition (i) of  Definition \ref{def6.2.1}. Condition (iii) of Definition \ref{def6.2.1} follows from \cite[Proposition 5.10]{KR:JFA}. As for condition (ii), observe that by condition (ii) of Definition \ref{def2.2} and (\ref{eq6.2.2}), for every $k\ge M$ we have
\begin{align*}
&\EE^{(c)}(T_M(u),h(u)\eta)\\&\quad\quad
+\int_{E\times E}(T_k(u)(x)-T_k(u)(y))(h(u)(x)-h(u)(y))\frac{\eta(x)+\eta(y)}{2}\,J(dx,dy)\\
&\qquad+\int_{E\times E}(T_k(u)(x)-T_k(u)(y))(\eta(x)-\eta(y))\frac{h(u)(x)+h(u)(y)}{2}\,J(dx,dy)\\
&\qquad+\int_E T_M(u)(x) h(u)(x)\eta(x)\,\kappa(dx)=\langle \mu_d,h(u)\eta\rangle+\langle\nu_k,h(u)\eta\rangle.
\end{align*}
Since $|T_k(u)(x)-T_k(u)(y)|\le |u(x)-u(y)|$, applying the Lebesgue dominated convergence theorem shows that the left-hand side of the above equality
tends to $\EE(u,h(u)\eta)$ as $k\rightarrow\infty$. On the other hand, by  Remark \ref{rem4.tvc}, $\lim_{k\rightarrow\infty}\|\nu_k\|_{TV}=0$, which shows that condition (ii) of Definition \ref{def6.2.1} is satisfied.
\end{proof}

\subsection*{Acknowledgements}
{\small This work was supported by Polish National Science Centre
(Grant No. 2017/25/B/ST1/00878).}

\end{document}